\documentclass[]{article}

\usepackage{xcolor}
\usepackage{amsthm,amsmath,amssymb}
\usepackage{hyperref}
\usepackage[capitalise, nameinlink]{cleveref}
\usepackage{enumerate}
\usepackage{graphicx}
\def\mpfile#1#2{\includegraphics{#1-#2-mps.pdf}}

\theoremstyle{plain}
\newtheorem{theorem}{Theorem}
\newtheorem{claim}[theorem]{Claim}
\newtheorem{proposition}[theorem]{Proposition}
\newtheorem{cor}[theorem]{Corollary}
\newtheorem{lemma}[theorem]{Lemma}
\newtheorem{observation}[theorem]{Observation}

\theoremstyle{definition}
\newtheorem{definition}{Definition}
\newtheorem{conjecture}{Conjecture}

\newtheorem{remark}{Remark}

%\title{Growing trees: \\Amoeba replication model}
%\title{Growing Trees and Amoebas' Replication Models}
% Growing of trees
% Extending an amoeba model
% Growth of trees Amoeba replication model

\title{Growing Trees and Amoebas' Replications}
%\title{Growing trees: \\Amoeba replication model}
%\title{Growing Trees and Amoebas' Replication Models}
% Growing of trees
% Extending an amoeba model
% Growth of trees Amoeba replication model

\author{
Vladimir Gurvich\thanks{National Research University Higher School of Economics;
RUTCOR, Rutgers the State University of NJ, USA;  \texttt{vladimir.gurvich@gmail.com}.}\and
Matjaž Krnc\thanks{FAMNIT and IAM, University of Primorska; texttt{matjaz.krnc@upr.si}.}\and
Mikhail Vyalyi\thanks{Federal Research Center “Computer Science and Control” of the Russian Academy of Science; National Research University Higher School of Economics; Moscow Institute of Physics and Technology; \texttt{vyalyi@gmail.com}.}}

\newcommand{\cA}{\mathcal{C}}

\def\orb#1{\mathop{\mathrm{Orb}}(#1)}
\def\diam#1{\mathop{\mathrm{diam}}(#1)}
\DeclareMathOperator{\supp}{supp}

\newcommand{\tm}{\widetilde m}

\raggedbottom

\begin{document}
\maketitle
\begin{abstract}
{An amoeba is a tree together with instructions how to iteratively grow trees by adding paths of a fixed length $\ell$.
This paper analyses such  a growth process.
An amoeba is mortal if all versions of the process are finite,
and it is immortal if they are all infinite.

We obtain some necessary and some sufficient conditions for mortality.
In particular, for growing caterpillars in the case $\ell=1$ we characterize mortal amoebas.
We discuss variations of the mortality concept, conjecture that  some of them are equivalent, and support this conjecture for $\ell\in\{1,2\}$.
}\\
\textbf{Keywords:} tree, amoeba, caterpillar, mortality
\end{abstract}

%%\pacs[JEL Classification]{D8, H51}

%%\pacs[MSC Classification]{35A01, 65L10, 65L12, 65L20, 65L70}

\maketitle

\section{Introduction}

A classical result of Dirac from 1961
\cite{MR130190}
states that every (non-null) chordal graph has a vertex whose neighborhood is a clique. 
This result was generalized in the literature in various ways \cite{MR485552,MR2659379,MR2057267,MR1927566,MR1626534,MR408312}. 
In 2019, Beisegel et al.~\cite{BCGMS19} proposed a common generalization of all these results by introducing the concept of \emph{avoidable paths}. 
An induced path $P$ in a graph $G$ is said to be \emph{avoidable} if every extension of $P$ is contained in an induced cycle.
Here, an \emph{extension} of $P$  is any induced path in $G$ that can be obtained by extending $P$ by one edge from each endpoint.
Beisegel et al.\ conjectured that for every positive integer $k$, every graph that contains an induced $k$-vertex path also contains an avoidable induced $k$-vertex path, and proved the statement for the case $k=2$.
The general conjecture was proved in 2020 by Bonamy et al.~\cite{MR4245221}.
A further strengthening was given by Gurvich et al.~\cite{GKMV20+}, who showed that in every graph, every induced path can be transformed into an avoidable one via a sequence of shifts (where two induced $k$-vertex paths are said to be \emph{shifts} of each other if their union is an induced path with $k+1$ vertices).

In \cite{gurvich2023avoidability} the concept of avoidability was applied to general graphs rather than only paths.
A graph $H$ with roots $s,t\in V(H)$ is said to be \emph{inherent} if every graph $G$ that contains an induced copy of $H$, also contains an avoidable induced copy of $H$.
In the special case when $H=(v_1,\dots,v_n)$ is a path with $s=v_1$ and $t=v_n$, the result of Bonamy et al. \cite{MR4245221} states that $(H,s,t)$ is inherent. 
In \cite{gurvich2023avoidability} the authors develop a so-called pendant extension  method for showing that a fixed $2$-rooted graph is not inherent, and identify many such $2$-rooted graphs.
This method is based on the process of growing trees.\footnote{The case of trees is the only non-trivial case: using the pendant extension method it is easily shown that the process stops for all connected graphs which contain a cycle (see also  \cref{rem2}).} 
If this process halts, a particular $2$-rooted tree is non-inherent. 
It is open if the halting problem for this process is decidable.

The above process is of independent interest. 
In this paper we generalize it in two ways. First, we allow  arbitrarily many roots. Second, a step  of our process may add an arbitrary path to a root, rather than a single edge.

An amoeba is a tree with a number  of roots assigned to each vertex. 
A tree $T$ may have several copies of a particular amoeba.
Each one requires a `certain growth space' in  $T$. If it cannot grow within $T$, the amoeba will live on by expanding $T$ --  attaching new paths at the points of its roots.
In this way new copies of the amoeba may appear in the expanded $T$. 
Alternatively, if every copy of  the amoeba in $T$ has enough growth space, the $T$ will not grow anymore. 
We say that $T$ confines such an amoeba.
To specify the finiteness of growth of a given amoeba we introduce several notions of \emph{mortality}. 
In this paper we present some necessary and some sufficient conditions for amoeba mortality.

Our process of  growing trees has relations to  several known models:

\medskip
\noindent
\textbf{String and graph generation models}$\quad$ are intensively studied in theoretical computer science, formal language theory, and mathematical biology. 
There are many models of this sort: cellular automata, rewriting systems, graph grammars, etc. \cite{citeulike:106131, DBLP:books/el/leeuwen90/DershowitzJ90, LINDENMAYER1968280, DBLP:conf/gg/1997handbook}. In particular, 
our model resembles  graph relabelling systems with forbidden contexts \cite{DBLP:conf/gg/1997handbook}. Most graph generation models use labelling of graphs. In contrast, we propose a simple rule of tree growth governed by the structure of subtrees and does not use labels on vertices and/or edges. The resulting process is interesting from the graph-theoretical point of view and possibly might impact  the area of graph generation models.
\medskip

\noindent
\textbf{The Galton-Watson process}$\quad$ is a probabilistic process of growing trees.
It is a foundational concept in the realm of probability theory, specifically in the study of family trees and the dynamics of progeny within populations \cite{gw1}. Introduced by Francis Galton and Henry Watson \cite{gw2,gw1}, this stochastic process serves as a mathematical model to examine the transmission of traits, characteristics, or lineage through successive generations of families \cite{gw1,mitov_yanev_1984}. The key idea revolves around the probability distribution of offspring within a family, where individuals independently give rise to a random number of descendants \cite{gw1}. This model has proven particularly valuable in fields such as genetics and demography \cite{gw2}, providing insights into the likelihood of lineage continuation or extinction \cite{gw1}. The Galton-Watson process remains an essential tool for understanding the unpredictable nature of family structures and the variability in the number of progeny across generations \cite{mitov_yanev_1984}.

In contrast to the Galton-Watson process, the replication criteria of the amoeba replication process is not dictated by a  probability distribution. Instead, the expansion is orchestrated by the existence of a specific subgraph within the existing tree, acting as a trigger for the generation of additional leaves or progeny. 
This departure from stochastic randomness to a combinatorial criterion opens up new avenues for exploring the interplay between structure and growth of trees.

\vspace{0.4cm}
This paper is structured as follows. In the beginning of \cref{sec:amoebas} we introduce the main concepts of the paper, such as \emph{amoeba}, its \emph{extension}, \emph{growth}, \emph{mortality}, \emph{immortality}, and \emph{confinement}. 
In \cref{subs:init} we relate mortality and confinement conditions, while in \cref{subs:orbit} we discuss relations between the symmetries of the underlying graphs and amoeba mortality.
In \cref{sec:1-extensions} we focus on the mortality conditions for amoebas growing in the simplest way, namely, by attaching pendant edges.
We identify degree conditions which are necessary for any such amoeba to be not mortal in \cref{subs:degree}, while in \cref{subs:caterpillar} we describe the not mortal amoebas whose underlying graph is a caterpillar. 
We conclude the paper with \cref{sec:future}, in which  we propose some conjectures as well as directions for future work.

\section{Amoebas}
\label{sec:amoebas}

    An \emph{amoeba}\footnote{This name was recently given to a different class of graphs~\cite{DBLP:journals/combinatorics/CaroHM23}. We think that these two classes cannot be confused.} $A=(T,m)$ is defined as a pair consisting of a tree $T$, together with the 
    \emph{root function} 
    $m:V(T)\to\mathbb N$ mapping every vertex $v$ of $T$ to a non-negative integer $m(v)$ called the \emph{multiplicity} of $v$.

    A vertex from $A$ of positive multiplicity is called a root of $A$. 
    Furthermore, we define \emph{the number of roots} 
 of $A$ to be $\sum_{v\in V(H)} m(v)$. 
    The set of roots of $A$ is denoted by $\supp(m)$.

  An \emph{$\ell$-extension of amoeba} $(H,m)$ is a graph $G$ obtained from $H$ by applying to each vertex $v$ the following procedure: take $m(v)$ disjoint paths
    $P_{\ell+1}$
and  identify one endpoint of each with $v$. The paths are called \emph{extension paths}.

A \emph{copy of amoeba} $(H,m)$ in a graph $G$ is an amoeba $(H',m')$ where $H'\le G$, and $H\cong H'$ such that the functions $m,m'$ coincide under the corresponding isomorphism.
When there is no possibility of ambiguity, we denote an amoeba and its copy  by the same notation.

Let
  $A$
  be a copy of an amoeba in a tree $G$. 
  Then an \emph{$\ell$-growth} $G'$ of $A$ in $G$ is a tree 
containing both $G$ and an $\ell$-extension of $A$ and having the minimal possible number of edges.
Note that an {$\ell$-growth} is not uniquely defined.

If, for a copy $A$ of an amoeba in graph $G$,  all growths of $A$ in $G$ coincide with $G$, then the copy is called \emph{dead}. Otherwise, the copy is called \emph{alive}.

Note that a copy of an amoeba may have non-isomorphic growths; see an
example in Fig.~\ref{pic:growth}.

\begin{figure}[!h]
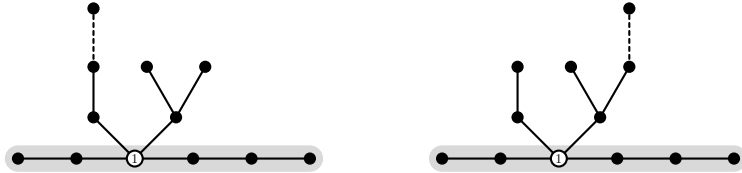

  \centering
    \mpfile{growth}{10}\qquad\qquad \mpfile{growth}{11}
    
    \caption{Non-isomorhic 3-growths of the same copy of an amoeba. The copy
    is shadowed in grey, the new edge is dashed}
    \label{pic:growth}
  \end{figure}

For an amoeba $A=(H,m)$, an $\ell$-\emph{growth sequence} of  $A$ starting in a tree $T$
 is a (possibly infinite)  sequence of trees $T_0,T_1,\dots$ such that 
  $T_0=T$ and $T_{i+1}$ is an $\ell$-growth of a copy of $A$ in $T_i$, and $T_{i+1}\neq T_i$. 
 An $\ell$-{growth sequence} is \emph{maximal} if it cannot be extended.

It will be useful to define the following  special case of maximal $\ell$-growth sequences:
for an amoeba $A=(H,m)$, an $\ell$-\emph{generation sequence} of  $A$  is a maximal $\ell$-growth sequence starting in $H$. See an example in Fig.~\ref{pic:generation}.

\begin{figure}[!h]
  \centering
 \includegraphics{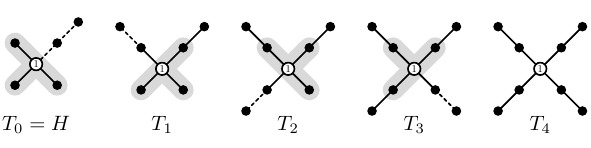}  
    \caption{A generation sequence for an amoeba $H$ with $\ell=2$. The $\ell$-growth of $T_4$ is $T_4$}
    \label{pic:generation}
\end{figure}

If $\ell$ is clear from the context it will be omitted in notation for extensions, growths, generation sequences, etc.

Let $\ell$ be the extension parameter, let $A$ be an amoeba, and let $T$ be a tree. We introduce the following definitions.
\begin{itemize}
    \item Amoeba $A$ is  \emph{immortal} w.r.t $T$ if all maximal $\ell$-growth sequences of $A$ starting in $T$ are infinite.
    \item Amoeba $A$ is \emph{mortal} w.r.t $T$ if all maximal $\ell$-growth sequences of $A$ starting in $T$ are finite. 
    \item When $A = (H,m)$ is (im)mortal w.r.t $H$, we say that $A$ is (im)mortal.
    \item  A tree $T$ is \emph{confining} for an amoeba $A $ if it admits a copy of $A$ and all copies of $A$ in $T$ are dead.
\end{itemize}
Note that any $\ell$-growth sequence  starting in a confining tree is of length $1$. {It remains open whether all generation sequences of $A$ can be infinite.}
\begin{itemize}
  \item An amoeba is  \emph{totally confined} if for each tree $T$ containing a copy of $A$ there exists a confining tree $T'$ containing a copy of $T$.
\end{itemize}

\begin{remark}\label{rem:confining-graphs}
    Confining trees can be replaced by confining
    graphs.
    Totally confined amoebas can be defined in this more general setting too. 
\end{remark}

\begin{remark}\label{rem2}
One can use general graphs rather than trees in the extension process (in \cite{gurvich2023avoidability} it was done for general two-rooted graphs). 
    Since attaching paths to a graph does not change its set of cycles, this process is finite for non-trees.
    As we mainly study mortality, we restrict our attention to the case of trees.
\end{remark}

\subsection{Initial observations}\label{subs:init}
Note that a copy of an amoeba may have non-isomorphic growths, which lead to essentially different growth sequences.
We are going to show that this effect can take place only for early stages of the growth process and only for amoebas that are small enough w.r.t the growth parameter $\ell$.
An example in Fig.~\ref{pic:growth} cannot be seen in generation sequences, as we will prove. It is caused by the minimality condition.

 Let $T'$ be an $\ell$-growth of an alive copy of an amoeba $A = (H,m)$ in a tree $T$, and let $A'$ be the corresponding extension of $A$ contained in $T'$, and $P$ be a path $v_0, v_1, \dots, v_{\ell}$  in the extension starting in a vertex $v_0\in V(H)$ such that $\{v_{\ell-1}, v_{\ell}\}\notin E(T)$. Take a connected component of the forest $T'-H$ containing $v_1$. It is a tree $T''$ rooted at $v_1$, which is called an \emph{area} of $P$. Due to the minimality condition, the depth of $T''$ (the maximal length of a path from the root to a leaf) is $\ell-1$.  Moreover, $v_1, \dots, v_{\ell}$ is a unique path of length $\ell-1$ from the root $v_1$. Note that other edges in $T'$ are the edges of the original tree $T$.  In this way we come to the following conclusion.

 \begin{proposition}
   The intersection of an area of an extension path and the original graph is a tree of depth at most $\ell-1$.
 \end{proposition}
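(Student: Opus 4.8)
The plan is to read the statement off the structural description of an area given in the paragraph just above, viewing the desired intersection as the subtree left after deleting from the area the edges created by the growth.

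First I would fix the area $T''$ of the extension path $P = v_0, v_1, \ldots, v_\ell$, rooted at $v_1$, and record the facts already available: $T''$ has depth $\ell - 1$, the sequence $v_1, \ldots, v_\ell$ is the unique path of length $\ell - 1$ from its root, and every other edge of $T''$ belongs to the original tree $T$. Write $F := T'' \cap T$ for the subgraph of $T''$ consisting of the edges that already lay in $T$; equivalently, $F$ is $T''$ with the growth-created edges along $P$ removed. Since $F \subseteq T''$ and $T''$ is a tree, $F$ is acyclic, so it remains to see that $F$ is connected and to bound its depth.

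For connectivity I would exploit that $T'$ is a tree. If $\{v_i, v_{i+1}\}$ is an edge created by the growth, then its endpoint $v_{i+1}$ must itself be a newly created vertex; otherwise $v_i$ and $v_{i+1}$ would already be joined by a path in $T$, and adding $\{v_i, v_{i+1}\}$ would close a cycle in $T'$. A newly created vertex is incident to no edge of $T$, so starting from the first created edge on $v_1, \ldots, v_\ell$ every subsequent vertex and edge is new; thus the created edges form a nonempty suffix $v_j, \ldots, v_\ell$ of the deepest path (nonempty because $\{v_{\ell-1}, v_\ell\} \notin E(T)$ by hypothesis). Deleting this suffix prunes only the tip of the unique deepest branch and leaves a connected subtree rooted at $v_1$, so $F$ is indeed a tree. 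As $F$ is a subtree of $T''$ sharing its root, every root-to-leaf path of $F$ is also a path from $v_1$ in $T''$ and so has length at most the depth $\ell - 1$ of $T''$; this is the claimed bound. The degenerate case in which the edge $\{v_0, v_1\}$ is itself created (so $v_1 \notin V(T)$ and $F$ is empty) satisfies the bound vacuously.

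The one genuinely load-bearing point is the connectivity of $F$, i.e. ruling out an original edge of $T''$ lying strictly below a created edge on the path from the root. I expect the tree property of $T'$ — which forces every created edge to open onto fresh vertices — to settle this cleanly, after which the depth bound is immediate.
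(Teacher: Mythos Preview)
Your proposal is correct and follows exactly the line the paper intends: the paper gives no separate proof for this proposition, deriving it directly from the structural facts in the preceding paragraph (depth of $T''$ equals $\ell-1$, the path $v_1,\dots,v_\ell$ is the unique deepest branch, and all other edges of $T''$ lie in $T$), and you have simply made that inference explicit, including the connectivity of $F$.

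One small wording issue: in your cycle argument you write ``otherwise $v_i$ and $v_{i+1}$ would already be joined by a path in $T$'', but once you are past the first created edge the vertex $v_i$ may itself be new, so it is not in $T$. The fix is immediate---run the cycle argument between $v_0$ (which is always in $T$) and $v_{i+1}$: if $v_{i+1}\in V(T)$ then the $T$-path from $v_0$ to $v_{i+1}$ together with the extension path $v_0,\dots,v_{i+1}$ (which uses the new edge) gives two distinct $v_0$--$v_{i+1}$ paths in $T'$, contradicting that $T'$ is a tree. With that adjustment your suffix claim and the rest of the argument go through cleanly.
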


\def\I{{\mathcal{I}}}

 Given a tree $T$ and its subtree $T'$, there is a partial order on $V(T)\setminus V(T')$ defined as follows: $u\leq^{T'} v$ if $u$ belongs to the path from $v$ to $T'$. For each $v \in V(T)\setminus V(T')$ the set $\{u: v\leq^{T'} u\}$ induces a tree $\I_{v}^{T'}$ rooted at $v$. 
 We will not mention $T'$ in the exponent when it is clear from the context.

 \begin{lemma}\label{lm:depth-bound}
   Let  $T_0,T_1,\dots, T_k$ be an $\ell$-growth sequence for an amoeba  $A=(H,m)$ and $v\in V(T_k)\setminus V(T_0)$ such that $\I_v^{T_0}$ has depth at most $\ell-1$. If for some vertex $u$ in $\I_v^{T_0}$ there are two different paths $P$ and $P'$ of equal lengths from $u$ to a leaf, then
   \[
\text{\textup{diam}} (H) \leq 2\ell-4\ \text{and}\ \text{\textup{diam}} (T_0 )\leq 2\ell-6.   
   \]
 \end{lemma}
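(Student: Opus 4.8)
The plan is to trace each of the two equal tails back to the $\ell$-extension that created it, and then to use minimality of growths to show that two such tails can meet at $u$ only when $A$ and the seed tree $T_0$ are small.

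First I would record two elementary facts. Since every vertex outside $T_0$ is created as an interior or terminal vertex of some extension path, and only a terminal vertex of such a path can be a leaf, each leaf-end of $P$ and of $P'$ is the tip of an extension path of length $\ell$; thus $P$ and $P'$ are the bottom $d$ edges of two length-$\ell$ extension paths $Q$ and $Q'$, where $d=|P|=|P'|$. Because $P,P'\subseteq\I_v^{T_0}$ and $\I_v^{T_0}$ has depth at most $\ell-1$, I also note $d\le\ell-1$, so neither tail can be an entire extension path issuing from $u$.

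Next I would show that the two branch edges $ua_1$ and $ub_1$ at $u$ are created at two different steps, by two different copies of $A$. A single growth attaches pairwise disjoint extension paths to one copy, so two of them cannot share the vertex $u$; and $u$ cannot be a root of that copy either, since then a tail from $u$ would have the full length $\ell>d$. The uniqueness of the longest path in an area, established in the Proposition above, rules out the remaining possibility of two equally deep tails inside a single area. Hence, relabelling if necessary, $ua_1$ is created first and $ub_1$ at some later step, at which a copy $H''$ of $A$ is grown.

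The heart of the argument is this later step. At that moment the tail $P$ is already present with its full length $d$, so continuing an extension path along $P$ would cost no new edges, whereas creating $P'$ costs $d$ new edges; by minimality the growth is forced to reuse $P$ unless $P$ is unavailable to $H''$. An edge of $P$ lying in $E(H'')$ is impossible: since $H''$ is a connected subtree of a tree, this would drag the entire path from the root of $H''$ into $H''$, contradicting that this path is (part of) an extension path of $H''$ and hence edge-disjoint from $E(H'')$. Therefore $P$ is blocked by a second extension path of $H''$ that enters the tail region from the side, and tracking the root of $H''$, the anchor of this blocking path, and the vertex $u$ shows, again by connectivity of $H''$, that all of these lie on a bounded stretch of the spine $P\cup P'$ together with the approaches of $Q$ and $Q'$. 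Optimising the positions of the two roots subject to $d\le\ell-1$ and to the length-$\ell$ constraint on the extension paths then yields $\diam{H}\le2\ell-4$; for the seed tree I would locate the copy of $A$ sitting inside $T_0$ and observe that an extension path anchored in $T_0$ must spend an extra edge on each side before leaving $T_0$, which accounts for the two-unit gap and gives $\diam{T_0}\le2\ell-6$.

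The step I expect to be the main obstacle is the case analysis hidden in the previous paragraph: the two extension paths may approach $u$ from the same side, from opposite sides (a crossing configuration in which the two roots sit on $P$ and on $P'$), or one tail may in fact run upward through $v$ into $T_0$, and each configuration requires separately excluding the reuse option and re-deriving the confinement of $H''$. Pinning down the exact constants $2\ell-4$ and $2\ell-6$, rather than bounds off by one, will hinge on treating correctly the boundary cases in which a root coincides with $u$ or with $v$.
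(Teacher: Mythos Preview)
Your pivot is the wrong one, and this is why the argument does not close. You anchor everything at the step when the \emph{second} branch edge $ub_1$ is created, and then assert that ``at that moment the tail $P$ is already present with its full length $d$''. But you only argued that the single edge $ua_1$ precedes $ub_1$; the remaining edges $a_1a_2,\dots$ of $P$ may well be added later. So the core minimality claim --- that the growth creating $ub_1$ could have reused $P$ at zero cost --- is unsupported. A related slip is your opening assertion that $P$ and $P'$ are the bottom $d$ edges of two length-$\ell$ extension paths: the extension path that created the terminal leaf of $P$ certainly ends there, but it may branch off $P$ long before reaching $u$, so it need not contain $P$ at all.

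The paper fixes this by pivoting at the opposite end of the process: take the \emph{maximal} $i$ for which some edge of $P\cup P'$ is still absent from $T_i$, say the last edge $e=\{u_0,u_1\}$ of $P$. At that step every other edge of $P\cup P'$ already exists, so minimality bites cleanly. The extension path producing $e$ runs from some root $r$ of the active copy to $u_1$; one then rules out $u$ lying on this path (else reroute along the already-present $P'$ and save $e$) and rules out $r\in P$ (else the extension from $r$ would have to be longer than $|P|<\ell$). The two remaining configurations both force the entire copy of $H$ into the subtree hanging off $r$ away from $u_1$, which sits inside $\I_u^{T_0}$ and hence has depth at most $\ell-2$; this gives $\diam H\le 2\ell-4$ directly, with no optimisation over positions.

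Your route to $\diam{T_0}\le 2\ell-6$ is also off: this is a growth sequence, not a generation sequence, so $T_0$ need not be (or be close in size to) a copy of $H$, and ``locating the copy of $A$ sitting inside $T_0$'' tells you nothing about $\diam{T_0}$. The paper instead bounds $T_0$ through the same root $r$: minimality forces every path from $r$ through $w$ and $u$ to have length at most $\ell-1$, while the path from $r$ down to $T_0$ already spends at least three edges before reaching $T_0$; since all of $T_0$ is reached from $r$ along such paths, $\diam{T_0}\le 2\ell-6$ follows.
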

 \begin{proof}
 W.l.o.g. we can assume  that $P$ and $P'$ have no common edges. 
 Indeed, otherwise we can change the vertex $u$ by the last common vertex of $P$ and $P'$.
 Choose the maximal $i$ such that at least one edge of $P\cup P'$ is not in $T_i$. 
 W.l.o.g. we can assume that $e=(u_0, u_1)$ is the last edge of $P$ and $e$ is not in $T_i$.
 It means that $e$ is added in a growth in $T_i$ of a copy  $A = (H,m)$ of the amoeba.
 So an extension of $A$ contains an extension path from a root $r$ of $A$ to $u_1$. 
 We claim  that $u$ is not  on the path from $r$ to $u_1$ that has no common edges with $P'$.
 Indeed, otherwise $P'$ does not intersect $H$ and the extension path can be changed to the path containing  $P'$,  contradicting the minimality condition; see Fig.~\ref{pic:depth}(a). 
 Also, $r$ is  not  on $P$. Indeed, otherwise the extension should add more edges, since the length of the whole path $P$ is less than $\ell$; see Fig.~\ref{pic:depth}(b).

\begin{figure}[!h]
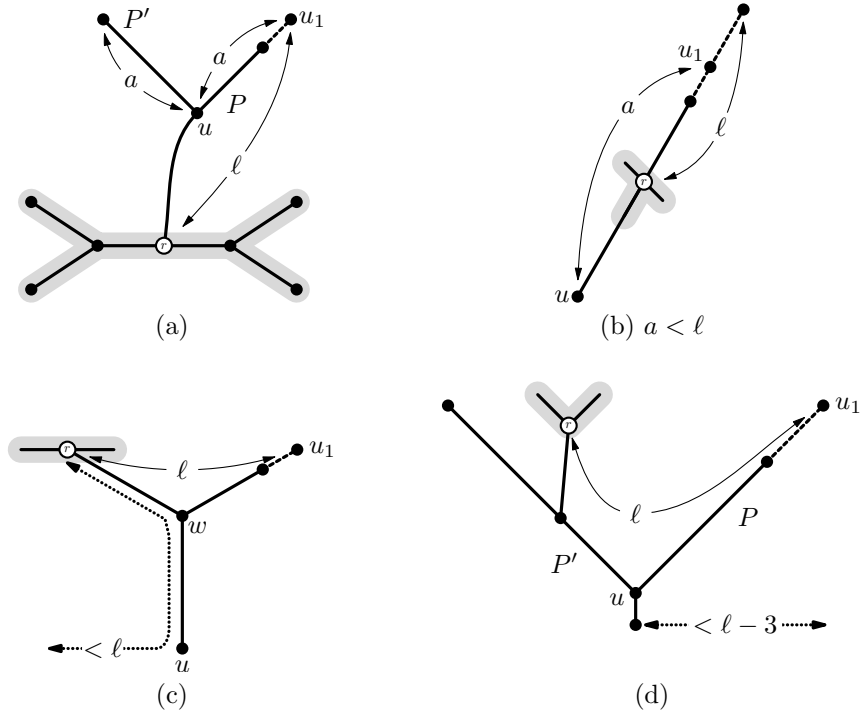

  \centering

  \begin{tabular}{c@{\qquad\qquad}c}
    \mpfile{growth}{20} &\mpfile{growth}{21}\\
    (a) & (b) $a<\ell$\\[5mm]
    \mpfile{growth}{22} &\mpfile{growth}{23} \\
    (c) Option \eqref{Option1} & (d) Option \eqref{Option2}
  \end{tabular}
   
    \caption{Four ways of extension}
    \label{pic:depth}
  \end{figure}

In contrast, the next two options are possible and mutually exclusive:
\begin{enumerate}[(i)]
\item\label{Option1}  vertices $u$, $u_1$, $r$ are end-points of paths starting in a vertex $w$; see Fig.~\ref{pic:depth}(c);
\item\label{Option2}  the path from $r$ to $u_1$ has a common edge with $P'$; see Fig.~\ref{pic:depth}(d).
\end{enumerate}

   In both cases $A$ is contained in the subtree rooted at $r$ since the path from $r$ to $u_1$ has no common edges with $A$. This subtree is inside $\I^{T_0}_u$ so its depth is at most $\ell-2$, which implies that diameter of $H$ is at most $2\ell-4$.
   Also, any  path starting in $r$ and containing $w$ and $u$ 
   should have length at most  $\ell-1$, by the minimality condition. 
   Since $T_i$ contains $T_0$ and the path from $r$ to $T_0$ has at least 3 edges, the diameter of $T_0$ is at most $2\ell-6$.
 \end{proof}

\cref{lm:depth-bound} puts restrictions on extensions using `new' edges only. Non-isomorphic $\ell$-growths are possible in generation sequences even if $H$
has the diameter at least $2\ell-6$; see an example in Fig.~\ref{pic:ambiguity}.

\begin{figure}[!h]
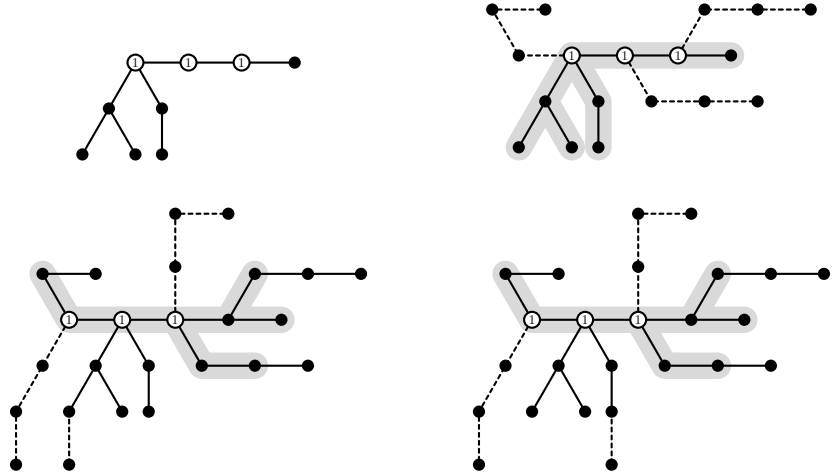

  \centering
  \begin{tabular}{c@{\qquad\qquad}c}
    \mpfile{amb}{10} &\mpfile{amb}{11}\\[5mm]
    \mpfile{amb}{12} &\mpfile{amb}{13}
  \end{tabular}
  \caption{An amoeba admitting ambiguous $3$-extensions in a generation sequence}
  \label{pic:ambiguity}
\end{figure}

  \begin{figure}[!h]
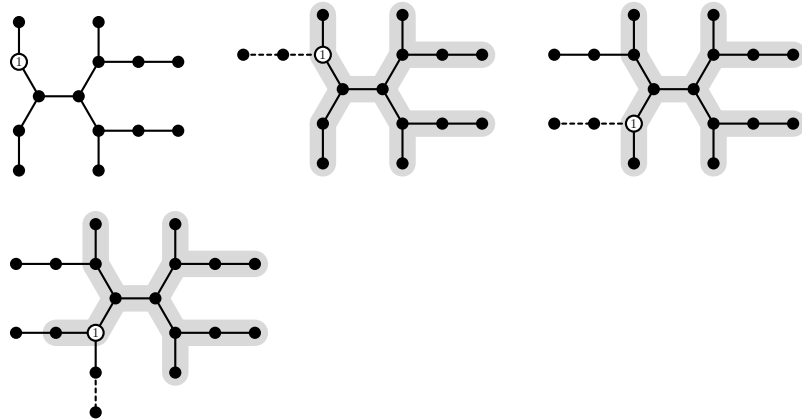

  \centering
    \mpfile{1root}{10}\qquad \mpfile{1root}{11}

    \bigskip
    
    \mpfile{1root}{12}
    \qquad \mpfile{1root}{13}
    
    \caption{A not mortal amoeba with one root with respect to $2$-extensions. The next step uses the left lower $P_5$ for extensions. Repetitions of the process continue forever}
    \label{pic:1root-P3}
  \end{figure}

It is clear that the last tree in any generation sequence is confining. Thus, mortality implies the existence of a confining tree. 
The converse is proven only for $\ell\in \{1,2\}$ and it remains open in general.

\begin{lemma}\label{lm:suff-conf-mort}
For extension parameter $\ell\in \{1,2\}$, an amoeba is mortal if and only if there exists a confining tree for it.
\end{lemma}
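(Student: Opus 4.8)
The plan is to treat the two implications separately; only one of them actually needs the restriction on $\ell$. The forward implication holds for every $\ell$: if $A$ is mortal then every generation sequence is finite, and a maximal finite generation sequence ends at a tree $T_k\supseteq H$ in which no copy of $A$ is alive (otherwise the sequence could be prolonged). Since $T_k$ contains the copy $H$ and all of its copies of $A$ are dead, $T_k$ is confining. In contrapositive form this says: if no confining tree exists then every tree containing a copy of $A$ has an alive copy, so every generation sequence can always be prolonged and $A$ is (strongly) immortal. Thus the entire content of the lemma is the reverse implication for $\ell\in\{1,2\}$, namely that the existence of a confining tree forces mortality; contrapositively, an immortal amoeba admits \emph{no} confining tree, i.e.\ every tree that contains a copy of $A$ must contain an \emph{alive} copy.

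For this reverse implication I would first turn deadness into a \emph{local} condition, which is exactly where $\ell\le 2$ enters. By the Proposition above, the intersection of an area of an extension path with the original tree has depth at most $\ell-1\le 1$, and by Lemma~\ref{lm:depth-bound} the required bound $\mathrm{diam}(T_0)\le 2\ell-6$ is negative for $\ell\le 2$, hence impossible, so the ambiguous, branching growths it describes cannot occur. Consequently the extension paths witnessing deadness reach only a bounded, essentially peripheral neighbourhood of each root, and a copy $(H',m')$ in a tree $T$ is dead if and only if a neighbourhood condition holds at every root $v$: there exist $m(v)$ pairwise internally disjoint paths of length $\ell$ leaving $v$ and meeting $H'$ only in $v$. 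For $\ell=1$ this is just the requirement that each root have $m(v)$ neighbours outside $H'$; for $\ell=2$ it asks in addition that these neighbours each carry one further outgoing edge. I would record this local characterization as the technical core.

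With deadness localized, I would prove the contrapositive by an extremal argument. Assume $A$ is immortal and let $T$ be any finite tree containing a copy of $A$. Using immortality one extracts, from an infinite generation sequence, a fixed \emph{self-reproducing} root configuration whose only way to die is to spawn extension paths that themselves create a fresh, still-alive copy of the same kind one step further out. Because for $\ell\le 2$ these paths live in a depth-$\le 1$ peripheral layer, the associated deadness demands propagate strictly outward and accumulate toward the leaves of $T$; a copy placed as far from the interior as possible then has no room left for its required length-$\ell$ outgoing paths in any direction, and is therefore alive, so $T$ is not confining. The main obstacle is precisely this local-to-global step: showing rigorously that an immortal amoeba's local deadness constraints can never be satisfied simultaneously on a finite tree. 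This is also where the hypothesis $\ell\le 2$ is essential, since for $\ell\ge 3$ an extension path can reach to depth $\ell-1\ge 2$ into the existing tree, so deadness becomes genuinely non-local and a finite tree could in principle meet every demand at once; this is why the equivalence is only claimed for $\ell\in\{1,2\}$. The secondary bookkeeping of handling roots of multiplicity $m(v)\ge 2$, through the disjointness of the $m(v)$ required paths, I expect to be routine once the single-path case is settled.
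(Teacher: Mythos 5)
Your forward implication is fine and matches the paper's (which dismisses it as obvious): a mortal amoeba has a finite maximal generation sequence whose last tree is confining. The problem is the reverse implication, where your argument has a genuine gap that you yourself flag: you reduce everything to ``showing rigorously that an immortal amoeba's local deadness constraints can never be satisfied simultaneously on a finite tree'' and then do not show it. Worse, the extremal sketch you offer --- place a copy ``as far from the interior as possible'' so that it has no room for its outgoing paths and is therefore alive --- nowhere uses immortality and would, if it worked, show that \emph{no} amoeba ever admits a confining tree. That is false: the positions of copies of $H$ inside a tree $T$ are constrained by degrees (a vertex of $H$ of degree $5$ cannot sit next to the leaves of $T$), and e.g.\ a star $K_{1,6}$ confines the amoeba $(K_{1,5},m)$ with $m=1$ at the centre. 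So the local-to-global step is not a routine obstacle; it is the whole theorem, and your plan does not contain an idea for it.

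The paper's proof goes in the opposite, direct direction and is much shorter. Given a confining tree $T'$ and a copy of $A$ in it, start a generation sequence from that copy and maintain inductively an embedding $T_i\hookrightarrow T'$. The one fact specific to $\ell\in\{1,2\}$ is that \emph{all growths of a given copy in a given tree are isomorphic}, because the areas of extension paths are rooted trees of depth at most $\ell-1\le 1$, and in a rooted tree of depth $\le 1$ all leaves lie in a single orbit of automorphisms, so there are no genuinely different minimal ways to complete the extension. Since the image of the extended copy in $T'$ is dead, $T'$ already contains a growth of that image, and by uniqueness of growths this growth together with the embedding of $T_i$ gives an embedding of $T_{i+1}$ into $T'$. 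The sequence is strictly increasing inside the finite tree $T'$, hence terminates, so $A$ is mortal. You correctly isolated the depth-$(\ell-1)$/uniqueness-of-growths phenomenon as the place where $\ell\le 2$ enters (your appeal to \cref{lm:depth-bound} is not quite the right citation --- the relevant fact is the single-orbit observation, not the diameter bounds), but you deployed it inside a contrapositive scheme that cannot be completed as described; you should instead use it to push the generation sequence into the confining tree.
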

\begin{proof}
  The `if part' is obvious. 
  The `only if part': take a confining tree and a copy of the amoeba. Start a generation sequence from this copy. 
  Note that $1$- and $2$-growths of a copy of the amoeba in any tree are isomorphic. 
  
  Indeed, if an extension path from a root contains a new edge then the root should be a pendant vertex in the initial tree or should be adjacent to a pendant vertex since $\ell\leq2$. Thus, we add either a pendant edge or a path of length $2$.  
  In the tree that is the union of two different growths these edges or paths belong to the same orbit of automorphisms. Therefore, the different growths are isomorphic to each other.
This implies that at each step of extension new edges can be chosen from the confining tree. Therefore, the sequence is finite.
\end{proof}

\begin{remark}\label{rem:12}
    In the general case it is open whether the immortality is  the negation of mortality. 
        For the extension    parameter  $\ell\in
    \{1,2\}$ it is true due to \cref{lm:suff-conf-mort}. 
    If an amoeba is not  immortal then the last tree of its 
     finite generation sequence is a confining tree for the
     amoeba. 
     Thus, all its generation sequences are finite. 
     So the amoeba is mortal.
\end{remark}

In the proof of the next theorem we will use the notion of full $t$-ary tree of depth $\ell$.
To this end we denote  by $T^{t,\ell}$  a rooted  
tree in which every node at distance  less than $\ell$ from the root has $t$ children,  while
every node at distance  $\ell$  is a leaf.

\begin{theorem}\label{th:mortality-confinement}
   An amoeba is 
   mortal w.r.t all trees
   if and only if it is totally confined, regardless of its extension parameter. 
\end{theorem}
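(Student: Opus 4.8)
The plan is to prove the two implications separately. It is convenient to keep in mind the following per-tree reformulation: a tree $T$ containing a copy of $A$ starts an infinite growth sequence if and only if no confining tree contains a copy of $T$. Since partial immortality is exactly the existence of \emph{some} such $T$ of the first kind, and failure of total confinement is exactly the existence of \emph{some} $T$ (containing a copy of $A$) of the second kind, the theorem is equivalent to this per-tree equivalence. I will establish the easy implication by a greedy termination argument and the hard one by an embedding (absorption) argument; both are uniform in the extension parameter $\ell$.

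For ``not partially immortal $\Rightarrow$ totally confined'' I would argue directly. Fix any tree $T$ containing a copy of $A$ and extend it greedily: while the current tree has an alive copy of $A$, replace it by a growth of that copy. Each step strictly increases the number of edges, so, as $A$ admits no infinite growth sequence starting at $T$, this process terminates at some tree $T_n\supseteq T$. Termination means $T_n$ has no alive copy, i.e.\ every copy of $A$ in $T_n$ is dead; since $T_n$ still contains the copy coming from $T$, it is confining. Thus $T_n$ is a confining tree containing a copy of $T$, and as $T$ was arbitrary, $A$ is totally confined.

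For the converse I would prove the contrapositive: partial immortality implies that total confinement fails. Let $T_0\subsetneq T_1\subsetneq\cdots$ be an infinite growth sequence, where $T_{i+1}$ is a growth of a copy $C_i$ of $A$ in $T_i$. I claim that no confining tree contains a copy of $T_0$, witnessing the failure of total confinement. Suppose instead that $S$ is confining and $T_0\hookrightarrow S$. I would then show by induction on $i$ that $T_i\hookrightarrow S$. The heart of the induction is an \emph{absorption} step: given an embedding $\phi\colon T_i\hookrightarrow S$, the copy $\phi(C_i)$ is dead in $S$ because $S$ is confining, so $S$ already contains a full $\ell$-extension of $\phi(C_i)$; the growth $T_{i+1}$ merely completes the missing extension paths of $C_i$, and I want to route these newly added paths onto the extension paths already present in $S$ so as to extend $\phi$ to an embedding of $T_{i+1}$. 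Granting this, every $T_i$ embeds into the fixed finite tree $S$, whence $|E(T_i)|\le|E(S)|$ for all $i$, contradicting $|E(T_i)|\to\infty$.

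The main obstacle is precisely this absorption step. Because a copy of an amoeba can have non-isomorphic growths, the particular growth $T_{i+1}$ occurring in the given sequence need not be the one whose extension paths sit in $S$ in the same position as the branches of $\phi(T_i)$, so the naive extension of $\phi$ may fail and a re-embedding may be forced. For $\ell\in\{1,2\}$ all growths are isomorphic, which is exactly why in \cref{lm:suff-conf-mort} the new edges could always be chosen inside the confining tree; for general $\ell$ this uniqueness breaks down. My strategy to overcome it is to exploit the strong saturation of confining trees: since \emph{every} copy of $A$ in $S$ is dead, the requisite extension paths are available around $\phi(C_i)$ no matter which branches $T_{i+1}$ chooses to extend, and I would use this uniform availability, together with the fact that competing growths are minimal and hence of equal size, to always realise $T_{i+1}$ inside $S$. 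Making this routing precise — in particular controlling how the newly added paths interact with the branches of $T_i$ already embedded in $S$ — is the delicate part of the proof.
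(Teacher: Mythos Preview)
Your easy direction is correct and essentially identical to the paper's.

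For the hard direction you have correctly isolated the obstruction (non-isomorphic growths for $\ell\ge 3$), but the absorption step as you describe it has a genuine gap that I do not see how to close along the lines you suggest. Knowing that $\phi(C_i)$ is dead in $S$ tells you that $S$ contains extension paths of length~$\ell$ from each root of $\phi(C_i)$; it does \emph{not} tell you that these paths leave $\phi(T_i)$ at the same vertices at which the new edges of $T_{i+1}$ are attached to $T_i$. If $T_{i+1}$ hangs its new path at some $u\in V(T_i)$ while the extension path in $S$ diverges from $\phi(T_i)$ at an earlier vertex, there is no reason for $\phi(u)$ to have any neighbour in $S\setminus\phi(T_i)$, and $\phi$ cannot be extended. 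The ``uniform availability'' you invoke concerns copies of $A$, not arbitrary attachment points, and the equal-size remark about competing minimal growths does not help, since the obstruction is shape, not size. Re-embedding $T_i$ to dodge this would change the inductive hypothesis in a way you have not controlled.

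The paper circumvents this by two ingredients you are missing. First, it does not try to absorb into an \emph{arbitrary} confining supertree of $T_0$: it uses total confinement to produce a confining tree $T$ containing a full $\delta$-ary tree $T^{\delta,3\ell+2d}$ (with $\delta=\max_v\big(d(v)+m(v)\big)$ and $d=\diam{T_0}$) and places $T_0$ at its centre, so that any growth occurring near $T_0$, or while $\diam{T_i}\le 2\ell-6$, can be absorbed by brute force into the full tree. Second, for growths away from $T_0$ once $\diam{T_i}>2\ell-6$, it invokes \cref{lm:depth-bound}: under those hypotheses no vertex of the new part has two equal-length paths to leaves, hence all competing growths are isomorphic and the argument of \cref{lm:suff-conf-mort} applies verbatim. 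Both devices --- the specially engineered confining tree and the structural lemma restricting ambiguity --- are what make the embedding go through; your proposal has neither, and without them the ``delicate part'' you flag is not merely delicate but unproved.
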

\begin{proof}
Let $\ell$ be the extension parameter.

  In one direction it is easy. Suppose an amoeba is   mortal w.r.t all trees. 
  Take a tree $T$ and any $\ell$-growth sequence $T=T_0, \dots, T_k$ starting in $T$. It is finite and the last tree $T_k$ in the sequence confines the amoeba and contains $T$.

For the other direction, take a tree $T_0$ and a totally confined amoeba $A = (H, m)$. 
Let $\delta =  \max_{v\in V(H)} \big(d(v)+m(v) \big)$, $d = \diam {T_0} $. 
Choose a confining tree $T$ that contains a full $\delta$-ary tree $T^{\delta, 3\ell+2d}$ of sufficiently large depth, say, $3\ell+2d$.  
Choose a copy of $T_0$ containing the  center of $T^{\delta, 3\ell+2d}$. 
Start $\ell$-growth sequence $T_0, T_1, \dots$ from this particular copy. 
If $\diam {T_i}\leq 2\ell-6$ then any copy of $A$ in it is at distance at most $2\ell-6$ from the center of $T^{\delta, 3\ell+2d}$. 
Thus, any  growth of $T_i$ can be embedded to $T$. 
If an $\ell$-growth of $T_i$ uses edges of $T_0$, then the corresponding copy of $A$ is at distance at most $2d$ from the center of $T^{\delta, 3\ell+2d}$. 
Thus, again, the growth can be embedded to $T$. 
If an $\ell$-growth of $T_i$ does not use edges of $T_0$ and $\diam {T_i}>2\ell-6$, then \cref{lm:depth-bound} implies that all such growths are isomorphic. 
So we apply the same arguments as in the proof of \cref{lm:suff-conf-mort} and conclude that $T_{i+1} $ can be embedded in $T$. 
Therefore, the sequence is finite.
\end{proof}

\subsection{Orbit lemma}\label{subs:orbit}
Fix an amoeba $A=(H,m)$.
For a vertex $v\in V(H)$, we define the \emph{orbit} of $v$, denoted by $\orb v$, as the set of vertices $w\in V(H)$ such that there exists an automorphism of $H$ mapping $v$ to $w$. 
The \emph{completion} $[A]$ of amoeba $A$ is defined as  
\[
[A]=(H,m'), \qquad m'(v) = \max_{w\in \orb{v}} m(w).
\]

We call a pair of amoebas  $A=(H,m)$ and $A'=( H,m')$ \emph{equivalent} (to each other) if $[A]=[A']$.
Using this notation one can easily obtain the following claim.

\begin{lemma}\label{orbit-lemma}
Let $\ell$ be the extension parameter and $A=(H,m)$ be an amoeba.
If $\ell\in \{1,2\}$ then $A$ is mortal if and only if its completion is.
For other values of $\ell$, an amoeba is
 mortal w.r.t all trees
if and only if its completion is.
\end{lemma}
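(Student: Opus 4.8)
The plan is to exploit two features of the completion $[A]=(H,m')$: that $m'(v)\ge m(v)$ for every $v$, and that $m'$ is constant on orbits, so the value $m'(v)=\max_{w\in\orb{v}}m(w)$ can be \emph{realized} by an automorphic copy. Concretely, any embedding $\phi\colon H\hookrightarrow T$ and any automorphism $\sigma\in\mathrm{Aut}(H)$ produce a copy of $A$ with the \emph{same} image whose multiplicity at $\phi(v)$ equals $m(\sigma^{-1}(v))$; choosing $\sigma$ so that $\sigma^{-1}(v)$ is a maximizer gives a copy of $A$ demanding exactly $m'(v)$ extension paths at $\phi(v)$. Since a ``copy'' is merely an embedded subtree isomorphic to $H$, the copies of $A$ and of $[A]$ in any tree coincide as subtrees, and only the demanded multiplicities differ. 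These two remarks are the whole engine of the argument.

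First I would treat the easy, \emph{domination} direction. Because $m\le m'$, a full $\ell$-extension of $[A]$ at $\phi$ contains, after discarding paths, a full $\ell$-extension of every automorphic copy of $A$ at $\phi$; hence a tree that kills all copies of $[A]$ also kills all copies of $A$, so every confining tree for $[A]$ confines $A$. For $\ell\in\{1,2\}$ this gives, via \cref{lm:suff-conf-mort}, that $A$ immortal implies $[A]$ immortal: were $[A]$ mortal it would possess a confining tree, which would then confine $A$ and make $A$ mortal. For general $\ell$ the same subset-of-paths observation lifts to the total-confinement level: any tree carrying a copy extends, by total confinement of $[A]$, to a tree confining $[A]$ and hence $A$; thus total confinement of $[A]$ implies total confinement of $A$, which by \cref{th:mortality-confinement} yields that $A$ partially immortal implies $[A]$ partially immortal.

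The converse is the hard part: a \emph{simulation} argument showing that every move of $[A]$ can be reproduced by a block of moves of $A$. I would prove that a single $\ell$-growth of the copy $\phi$ of $[A]$, turning a tree $S$ into $S'$, factors into finitely many $\ell$-growths of the automorphic copies $\phi\circ\sigma$ of $A$ whose net effect is exactly $S'$. Process the automorphisms $\sigma$ one at a time, skipping a copy when it is already dead and growing it otherwise; because in a tree the extension paths attached at distinct vertices of the copy lie in disjoint components of $T\setminus E(H')$, the demands decouple per vertex, the attached paths accumulate monotonically, and after all $\sigma$ each $\phi(v)$ carries exactly $\max_{\sigma}m(\sigma^{-1}(v))=m'(v)$ paths. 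Where the growth is ambiguous I would use the freedom in choosing each $A$-growth to place its new pendant paths onto those of the target $S'$, so that the block lands precisely on $S'$. Applying this factorization to every step of an infinite $\ell$-growth sequence for $[A]$ — started at $H$ in the immortality case and at an arbitrary tree in the partial-immortality case — produces an infinite $\ell$-growth sequence for $A$ over the same trees, giving the implications $[A]$ immortal implies $A$ immortal and $[A]$ partially immortal implies $A$ partially immortal.

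I expect the decoupling step to be the main obstacle. One must verify not just that the block reaches \emph{some} tree with the right multiplicities, but that it can match the chosen ambiguous growth $S'$ exactly; this rests on the per-vertex independence of extension paths in a tree together with the minimality built into the definition of growth, which prevents any block step from overshooting the orbit-maximum. Finally, the asymmetry in the statement is genuine and worth flagging: the immortality equivalence for $\ell\in\{1,2\}$ relies on \cref{lm:suff-conf-mort}, which ties mortality to the mere existence of a confining tree and holds only for $\ell\in\{1,2\}$, whereas for general $\ell$ only the total-confinement equivalence of \cref{th:mortality-confinement} is available, which is exactly why the conclusion there must be phrased in terms of partial immortality.
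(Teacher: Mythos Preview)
Your domination direction coincides with the paper's: a confining tree for $[A]$ confines $A$, and then \cref{lm:suff-conf-mort} (respectively \cref{th:mortality-confinement}) converts this into the stated implication.

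Your converse, however, is proved by a genuinely different mechanism. The paper runs the confinement argument \emph{symmetrically}: assuming $A$ mortal, take a confining tree $T$ for $A$; if some copy of $[A]$ in $T$ were alive, then at a single witness vertex $v$ one has too few outgoing paths relative to $m'(v)=\max_{w\in\orb{v}}m(w)$, and the automorphism sending a maximizer $w$ to $v$ produces an alive copy of $A$ with the \emph{same} image, contradicting confinement. Thus a confining tree for $A$ is also confining for $[A]$, and \cref{lm:suff-conf-mort}/\cref{th:mortality-confinement} finish. No simulation, no block of moves, no matching of ambiguous growths is needed.

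Your simulation route is correct but heavier. The per-vertex decoupling you invoke is valid (extension paths at distinct roots lie in distinct components of $T-E(H')$), and one can indeed choose each $A$-growth inside the target $S'$ because the $[A]$-extension at a vertex already realizes an optimal packing of disjoint length-$\ell$ paths, so any required subfamily is still minimal with respect to the intermediate tree. The trade-off is clear: the paper's contrapositive-via-confinement needs only the single observation ``alive $[A]$-copy $\Rightarrow$ alive $A$-copy at the same image'', whereas your argument must additionally control the geometry of every intermediate growth. On the other hand, your approach is constructive---it literally manufactures an infinite $A$-sequence out of an infinite $[A]$-sequence---which the paper's proof does not.
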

\begin{proof}
  We start from the case $\ell\in \{1,2\}$.   Suppose that  $[A]$ is mortal, that is, it admits a confining tree $T'$.   It remains to observe that $T'$ confines also $A$, so $A$ is mortal by \cref{lm:suff-conf-mort}.

  Now suppose that $A$ is mortal. \cref{lm:suff-conf-mort} implies that there exists a~confining tree $T$ of $A$. Let  $(H,m')$ be a copy of $[A]$ in $T$, $v\in V(H)$, and $v^*$ be a vertex in $\orb {v}$ such that $m(v^*) = m'(v)$. 
  By the choice of $v^*$, there exists an automorphism of $H$ mapping $v^*$ to $v$. The corresponding copy of the amoeba $A=(H,m)$ has an $\ell$-extension inside $T$. It means that $H$ can be extended by  $m'(v)$ paths of length $\ell$ in $T$ such that  the paths have the only common vertex $v$ and each of these paths has the only common vertex $v$ with $H$.

  By using such paths at each vertex of $H$, one can construct an $\ell$-extension of any copy of $[A]$ inside $T$. It means that $T$ confines $[A]$.

For $\ell>2$, similar arguments show that an amoeba is 
not mortal w.r.t some tree
if and only if its completion is. The only difference is in applying \cref{th:mortality-confinement} instead of \cref{lm:suff-conf-mort}.
\end{proof}

\section{Case of $1$-extensions}\label{sec:1-extensions}

Understanding the mortality of general amoebas seems to be very difficult. 
In this section we focus on the mortality conditions for amoebas which grow in the simplest of ways -- by attaching pendant edges.\footnote{In our notation such amoeba growth corresponds to the case of $1$-extensions.}
In \cref{rem:12} we explained that for this case, immortality is just the negation of mortality.
The results of  \cite{gurvich2023avoidability} easily imply that 
any immortal amoeba is subcubic whenever the number of roots is equal to $2$.
The following subsection generalizes those results to the case of arbitrary number of roots.

\subsection{Degree conditions}\label{subs:degree}

  For the rest of this subsection fix  an immortal amoeba  $A=(H,m)$.
  Let $(d(v), m(v))$, $v\in H$, be the sequence of vertex degrees and multiplicities of the roots in $H$. We also define
$\tm=\max_{v\in V(H)}(d(v) +m(v))$.

  Fix an infinite $1$-generation sequence $G_0, G_1, \dots$.
  The next claim immediately follows from the definition of the $\ell$-extension.

  \begin{claim}
    If a vertex $v$ of $G_i$ is a root of a copy of the amoeba to be extended and 
    $d_{G_{i+1}}(v)>d_{G_i}(v)$ then $d_{G_i}(v)< d_{H}(v)+m(v)$.
  \end{claim}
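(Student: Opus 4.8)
The plan is to read the statement off directly from the definition of a $1$-growth at the vertex $v$, using only the minimality condition. Since $\ell=1$, every extension path is a single edge, so the relevant $1$-extension of the chosen copy $A$ (with underlying copy $H'$ of $H$) demands that $v$ be incident, in $G_{i+1}$, with $m(v)$ edges whose far endpoints lie outside $V(H')$. Writing $d(v)=d_{H'}(v)$ for the degree of $v$ inside the copy, the edges of $G_i$ already available for this role are exactly the $d_{G_i}(v)-d(v)$ edges at $v$ that leave $V(H')$.

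First I would establish the contrapositive: if $d_{G_i}(v)\ge\td(v)=d(v)+m(v)$, then no new edge is attached at $v$. Under this hypothesis $v$ has at least $m(v)$ neighbours outside $V(H')$ in $G_i$, and each corresponding edge is eligible to serve as one of the $m(v)$ pendant extension edges at $v$: for $\ell=1$ the leaf of an extension path lies outside $V(H')$, so no edge of $E(H')$ is reusable, whereas an edge $vy$ with $y\notin V(H')$ is eligible and, since $y\notin V(H')$ forces $v$ to be the only endpoint of $vy$ that is a root, cannot be claimed as an extension edge of any other root. Hence these edges meet the demand at $v$ with no new edges, and by minimality $G_{i+1}$ attaches none at $v$, giving $d_{G_{i+1}}(v)=d_{G_i}(v)$.

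Taking the contrapositive yields the claim: if $d_{G_{i+1}}(v)>d_{G_i}(v)$ then necessarily $d_{G_i}(v)<\td(v)$. In fact the same bookkeeping shows that the growth attaches exactly $\max\{0,\, m(v)-(d_{G_i}(v)-d(v))\}$ new edges at $v$, so the implication is an equivalence.

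The step I expect to need the most care is the eligibility/no-conflict observation in the second paragraph, which guarantees that the $d_{G_i}(v)-d(v)$ outside-edges can genuinely all be spent at $v$ and are not forced to be used elsewhere; without it, global minimality could in principle still demand a fresh edge at $v$ even when $d_{G_i}(v)\ge\td(v)$. For $\ell=1$ this is clean precisely because extension edges are pendant and their far endpoints fall outside the copy, so the choices at distinct roots decouple, and this is exactly the point that would fail for larger $\ell$, where extension paths can overlap. I would therefore state it explicitly rather than treat it as self-evident.
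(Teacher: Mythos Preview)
Your argument is correct and is exactly the paper's one-sentence contrapositive (if $d_{G_i}(v)\ge\td(v)$ then minimality forces the extension to reuse existing edges at $v$), just expanded with the decoupling observation and the sharper count $\max\{0,\,m(v)-(d_{G_i}(v)-d(v))\}$ of new edges. The paper does not spell out the no-conflict point at all, so your extra care is a supplement rather than a different route; if anything, note that what must be checked is distinctness of the pendant \emph{endpoints} across roots, which in a tree is automatic since a vertex $y\notin V(H')$ can be adjacent to at most one vertex of $H'$.
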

  \begin{proof}
    If $d_{G_i}(v)\geq d_{H}(v)+m(v) $ then an extension uses existing edges incident to $v$ due to the minimality condition.
  \end{proof}

  \begin{cor}\label{corr:degree-bounded}
    $d_{G_i}(v)\leq \tm$ for all $i$ and $v\in V(G_i)$.
  \end{cor}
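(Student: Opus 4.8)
The plan is to argue by induction on $i$ along the fixed $1$-generation sequence $G_0, G_1, \dots$, using the preceding Claim together with an exact description of how a single $1$-growth step changes degrees. The base case $i=0$ is immediate, since $G_0 = H$ and every vertex satisfies $d_{G_0}(v) = d(v) \le d(v) + m(v) = \td(v) \le m$. For the inductive step I assume $d_{G_i}(v) \le m$ for all $v \in V(G_i)$ and show the same for $G_{i+1}$.

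First I would isolate the only vertices whose degree can change. A $1$-growth attaches pendant edges, and it attaches them exclusively to the roots of the single copy of $A$ being extended; every other vertex of $G_i$ retains its degree, and the newly created pendant vertices are leaves, of degree $1 \le m$. Hence the bound can only fail at a root $v$ of the extended copy whose degree strictly increased, and those are exactly the vertices controlled by the Claim.

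The crux is to compute the new degree of such a root exactly. Let $v$ be a root of the extended copy with $d_{G_{i+1}}(v) > d_{G_i}(v)$, let $w$ be the vertex of $H$ corresponding to $v$ under the copy isomorphism, and split $d_{G_i}(v) = d_{H'}(v) + o$, where $d_{H'}(v) = d(w)$ is the degree of $v$ inside the copy $H'$ and $o$ counts its edges outside the copy. A $1$-growth adds to $v$ just enough pendant edges to make $o$ reach $m(w)$; since the degree genuinely increased we had $o < m(w)$ beforehand (equivalently, the content of the Claim, as $d_{G_i}(v) < \td(v)$ rewrites to $o < m(w)$), so afterwards the outside count is exactly $m(w)$ while the inside degree is untouched. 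Therefore $d_{G_{i+1}}(v) = d(w) + m(w) = \td(w) \le m$, completing the induction.

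I expect the only delicate point to be the bookkeeping of the two degrees at a copy vertex—its degree $d_{H'}(v)$ inside the chosen copy versus its total degree $d_{G_i}(v)$—and the observation that this analysis is insensitive to which copy is used: no matter which copy triggers a growth at $v$, saturation returns the total degree to the corresponding value $\td(w)$, which is bounded by $m$ by definition. The trivial case $m=0$ needs only a one-line remark, since then no growth ever occurs and the sequence is constant.
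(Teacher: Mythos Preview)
Your proof is correct and follows the same approach the paper intends: the paper states the corollary with no proof, relying implicitly on the induction-along-the-sequence argument you spell out. Your explicit computation that a root whose degree increases ends up at exactly $\td(w)$ is precisely the missing step between the Claim (which bounds the degree \emph{before} the growth) and the Corollary (which bounds it \emph{after}), and your treatment of the remaining vertices (unchanged degrees, new leaves of degree $1$) covers all cases.
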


This corollary provides a uniform upper bound on vertex degrees in all graphs~$G_i$. From this  we derive the following.

  \begin{cor}\label{saturation}
    For all $i$ and sufficiently large $j$, vertices of a copy extending at step $j$ do not intersect $V(G_i)$.
  \end{cor}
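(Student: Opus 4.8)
The plan is to exploit the uniform degree bound $d_{G_j}(v)\le m$ from the previous corollary: it forces every bounded-radius neighbourhood of the fixed finite tree $G_i$ to be finite, so such a neighbourhood can be enlarged only finitely many times, after which all growth activity is pushed far away from $G_i$. Since a copy of the amoeba has diameter only $\diam H$, once the extended root of a copy is far enough from $V(G_i)$ the whole copy must miss $V(G_i)$. Fix $i$ and put $D=\diam H$. Because each step of a growth sequence only attaches pendant edges, $G_i\subseteq G_j$ for all $j\ge i$, and adding pendant edges to a tree never shortens the (unique) path between two already-present vertices; hence for $x\in V(G_j)$ the distance $\mathrm{dist}_{G_j}(x,V(G_i))$ is the same in every later tree.

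Next I would track, inside $G_j$, the set $N_j$ of all vertices at distance at most $D+1$ from $V(G_i)$. Two properties are key. First, $N_j$ is non-decreasing in $j$: no vertex ever leaves it (distances to $V(G_i)$ are stable by the remark above) and growth can only add vertices. Second, $|N_j|$ is bounded independently of $j$: by the previous corollary every vertex of $G_j$ has degree at most $m$, so the ball of radius $D+1$ around any single vertex has at most $1+m+m(m-1)+\cdots+m(m-1)^{D}$ vertices, and therefore $|N_j|\le |V(G_i)|\cdot\bigl(1+m+m(m-1)+\cdots+m(m-1)^{D}\bigr)$. A non-decreasing sequence of finite sets with a common size bound must stabilize, so there is a step $J^\ast$ (depending on $i$) after which $N_j$ is constant.

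It remains to look at steps $j\ge J^\ast$. Let $C_j$ be the copy of $A$ extended at step $j$, and let $v$ be a root of $C_j$ that actually receives a new pendant edge; such a root exists because $G_{j+1}\ne G_j$. If $v$ were at distance at most $D$ from $V(G_i)$, then the new leaf attached to $v$ would lie at distance at most $D+1$ from $V(G_i)$ and would enter $N_{j+1}$, contradicting the stability of $N_j$ for $j\ge J^\ast$. Hence $\mathrm{dist}_{G_j}(v,V(G_i))\ge D+1$. Since $C_j\cong H$ has diameter $D$, every vertex $x\in V(C_j)$ satisfies $\mathrm{dist}_{G_j}(x,v)\le D$, so by the triangle inequality $\mathrm{dist}_{G_j}(x,V(G_i))\ge (D+1)-D=1>0$. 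Thus no vertex of $C_j$ lies in $V(G_i)$, i.e. $V(C_j)\cap V(G_i)=\varnothing$, which is exactly the assertion.

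The main obstacle is setting up the neighbourhood argument correctly rather than any hard computation. One has to check that pendant growth genuinely preserves distances among old vertices (so that $N_j$ is monotone) and that it is precisely the degree bound $d_{G_j}(v)\le m$ that makes $N_j$ uniformly finite; the single non-trivial design choice is to take radius $D+1$ instead of $D$, which is what upgrades the triangle-inequality estimate from the useless $\ge 0$ to the decisive $\ge 1$.
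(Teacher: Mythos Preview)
Your argument is correct. The paper states this result as a corollary with no proof at all, so there is nothing to compare against line by line; what you have written is exactly the kind of argument one must supply to justify the statement from the preceding degree bound $d_{G_j}(v)\le m$. The key idea---that the uniformly bounded degrees force the $(D+1)$-neighbourhood of $V(G_i)$ to have bounded size, hence to stabilise, after which any root actually receiving a new pendant must sit at distance at least $D+1$ from $V(G_i)$---is the natural one, and your use of the triangle inequality with the radius $D+1$ (rather than $D$) is the right choice to push the whole copy outside $V(G_i)$. One small remark: when you invoke $\mathrm{dist}_{G_j}(x,v)\le D$ for $x\in V(C_j)$, you are implicitly using that $C_j$ is a subtree of the tree $G_j$, so distances inside $C_j$ agree with distances in $G_j$; this is true and worth a word.
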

\begin{proof}
    If a vertex $v\in V(G_i)$ belongs to a copy of the amoeba  extending at some step, then a new pendant edge is added at distance at most $|V(H)|$ from $v$ in $G_{i+1}$. Since all degrees of vertices in all graphs in the generation sequence are bounded by $\tm$ due to \cref{corr:degree-bounded}, it can repeat 
    finitely many times. 
\end{proof}
These facts put restrictions on degrees of vertices in $H$. 
Recall that we add pendant edges. So, initially a degree of a new vertex $v$ is 1. 
Then it can be increased by extension of a copy having a root at~$v$. 
Thus, $\min_{v: m(v)>0}d(v) = 1$.

To catch the process of degree increasing, we introduce an auxiliary 
directed graph $D_A$. 
The vertices of $D_A$ are positive integers up to $\tm$. 
A pair  $(x,y)$ is an edge in $D_A$ if $y = d(v)+m(v)$ and $d(v)+m(v)> x\geq d(v)$ for some~$v\in V(H)$.

Let $q_A$ be the maximal integer reachable from 1 in $D_A$. 
By the definition of $q$, the inequality $q\leq \tm$ holds. 
For an immortal amoeba, 
let $v^*$ be such that
$\tm = d(v^*)+m(v^*)$.
Note that $\tm$ is reachable because  $d(v^*)$  appears in a new vertex (i.e. a~vertex from $V(G_i)\setminus V(G_0)$) and all copies containing this  vertex at some moment have enough space for the extension. This implies the following theorem.

\begin{theorem}\label{th:degree-condition}
  Let
  $A=(H, m)$ be an immortal amoeba 
  with its auxiliary digraph $D_A$. 
  Then
$q_A=\tm$ and $d_H(v)\leq \tm$ for all $v\in V(H)$.
\end{theorem}

\begin{claim}
  For any amoeba $A$ with $k$ roots, we have $\tm\leq 1+k$.
\end{claim}
\begin{proof}
  Let $x_1, x_2, \dots , x_t$ be a directed path in $D$. 
  By definition of $D$, for all $i\in [1,t]$ there  exists $v_i\in V(H)$ such that 
  $x_i= d(v_{i})+m(v_i)$ and $x_{i-1}\geq d(v_{i})$, for $i>1$. Then
\[
\begin{aligned}
  &x_1\leq 1;\\
  &x_2= d(v_{2}) + m(v_{2}) \leq 1+ m(v_{2});\\
  &\dots\\
  &x_t\leq 1+ \sum_{j=2}^t m(v_j)\leq 1+\sum_{v\in V(H)}m(v) = 1+k.
\end{aligned}
\]
The last inequality follows from the fact that each $x_j$ can appear along the path only once.
\end{proof}

\begin{cor}
Let  $A=(H,m)$  be an immortal amoeba such that $k=1$. Then  $\Delta(H)\leq 2$.
\end{cor}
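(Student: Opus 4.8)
The plan is to obtain this corollary as an immediate specialization of the results already established in this subsection, so no new argument is really needed. The cleanest route uses the preceding Corollary, which asserts that if $\Delta(H)>1+k$ then $A$ is mortal. Contrapositively, any immortal amoeba satisfies $\Delta(H)\leq 1+k$. Since we are given $k=1$, substituting yields $\Delta(H)\leq 2$, which is exactly the claimed bound.

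Alternatively, one can assemble the conclusion from the two more primitive facts. First, the Claim above bounds the quantity $m=\max_{v}\td(v)$ by $1+k$; with $k=1$ this reads $m\leq 2$. Second, part~(2) of Theorem~\ref{th:degree-condition} guarantees that for an immortal amoeba every vertex degree is controlled by this same quantity, namely $d_H(v)\leq m$ for all $v\in V(H)$. Chaining the two inequalities gives $\Delta(H)=\max_{v}d_H(v)\leq m\leq 2$, completing the argument.

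There is no genuine obstacle here: the statement is a one-line consequence of the machinery developed earlier, and the only point requiring attention is the notational coincidence that $m$ denotes both the root function $m\colon V(H)\to\mathbb N$ and the scalar $\max_v\td(v)$; one must read $d_H(v)\leq m$ with the latter meaning. In effect, all of the work was already done in proving the Claim, whose bound $1+k\leq 1+\sum_{v\in V(H)}m(v)$ is precisely what forces the maximum degree down into the degree-$2$ regime as soon as $k$ is as small as $1$. Since a tree with $\Delta(H)\leq 2$ is a path, this corollary records that the only immortal $1$-extension amoebas with a single unit of multiplicity live on paths.
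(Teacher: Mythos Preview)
Your proposal is correct and matches the paper's own treatment: the corollary is stated without proof, as an immediate specialization of the preceding Corollary (equivalently of the Claim $m\leq 1+k$ together with part~(2) of Theorem~\ref{th:degree-condition}). Your commentary on the notational overload of $m$ and the interpretation that $H$ must be a path is accurate bonus material.
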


\begin{cor}
Let $A=(H,m)$ be an amoeba.
If $\Delta(H)>k+1$, then $A$ is mortal.  
\end{cor}

\noindent
The following generalization of the above corollary remains open.

\begin{conjecture}
For any value of the extension parameter $\ell$,
and amoeba $A=(H, m)$, let $k$ be the number of roots in $A$.
If $\Delta(H)>k+2$ then $A$ is totally confined. 
\end{conjecture}

\subsection{Mortality criteria for caterpillars}\label{subs:caterpillar}
In this subsection a $1$-extension of a given amoeba will be referred to simply as an \emph{extension}. 
Furthermore, throughout this section we assume that the multiplicity function $m$ takes values $0$ and $1$ only. 
A \emph{caterpillar} $C(d_1,d_2,\dots,d_\ell)$ is a tree constructed as follows.
We start with a \emph{central path} $P_\ell=(p_1,\dots,p_\ell)$, and afterwards, for each $i\in \{1,\dots,\ell\}$, we add $d_i$ pendant vertices to $p_i$.
In addition, we require $d_1=d_\ell=0$.
A caterpillar is said to be symmetric if $d_i=d_{\ell-i+1}$, for all $i$. 
The family of amoebas $(H,m)$ where $H$ is a caterpillar and $m$ respects the above-mentioned properties is denoted by $\mathcal A$.
In this section we characterize mortality of amoebas from $\mathcal A$.

\begin{observation}
Let $H$ be a caterpillar with central path $P\subseteq V(H)$, and let $A=(H,m)$ be an amoeba such that $\supp(m)\subseteq P$. 
An extension of $A$
gives rise to a caterpillar.
\end{observation}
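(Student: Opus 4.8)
The plan is to unpack the definitions and verify that each extension step preserves the caterpillar structure, using the fact that all roots lie on the central path $P$. Recall that a caterpillar is precisely a tree whose non-leaf vertices induce a path, or equivalently a tree in which deleting all leaves leaves a path (its \emph{spine}). Since $\supp(m)\subseteq P$, every root of $A$ is a spine vertex of $H$, and in a $1$-extension we attach a single pendant edge to a root. So the extension operates by adding a pendant vertex to a copy of $A$ sitting inside the current tree $T$; my goal is to show the resulting tree is again a caterpillar.

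First I would recall the structure of a $1$-growth: by the minimality condition, a growth of a copy $A'$ of $A$ in $T$ adds at most one edge, and that edge is a pendant edge attached to some root $v$ of the copy (this follows from the definition of $\ell$-extension with $\ell=1$, where each extension path is a single edge, and from the minimality of growth). Thus I want to argue that attaching a pendant vertex to a spine vertex of a caterpillar yields a caterpillar. Let me make precise which caterpillar: if $H$ is the initial caterpillar and $T$ is already a caterpillar containing the copy, then the copy's roots lie on $P$, which maps into the spine of $T$. The key claim is: \emph{attaching a pendant edge to a spine vertex $v$ of a caterpillar $C$ produces a caterpillar.}

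The verification of this claim is the heart of the argument, and I expect it to be the main (though modest) obstacle, because one must be careful about what "spine vertex" means after the attachment. If $v$ is an internal spine vertex, the new pendant vertex becomes a new leaf hanging off $v$, and deleting all leaves of the enlarged tree still yields the same path $P$, so it is a caterpillar. The delicate case is when $v$ is a spine endpoint (the first or last internal vertex of the path of non-leaves): attaching a pendant there could, in principle, either lengthen the spine or add another leaf, but in either case the set of non-leaf vertices still induces a path, so the result remains a caterpillar. I would handle these cases by the clean criterion that a tree is a caterpillar if and only if it contains no subdivided star $S_{1,1,1}$ with all three legs of length $\ge 2$ (equivalently, removing all leaves gives a path); adding a single pendant leaf to a vertex that already lies on or adjacent to the spine cannot create such a forbidden configuration.

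Finally, I would tie this back to the amoeba framework: since the roots of the copy lie on $P\subseteq$ spine of the current caterpillar, the single new edge from the growth is always pendant at a spine vertex, so by the claim the growth is a caterpillar. This shows an extension of $A$ gives rise to a caterpillar, as required. The argument is essentially structural and requires no computation; the only care needed is in the boundary case of attaching at a spine endpoint, which is why I single it out as the point to treat explicitly.
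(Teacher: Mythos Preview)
Your core claim---that attaching pendant edges to spine vertices of a caterpillar yields a caterpillar, with the spine possibly lengthened by one at an endpoint---is correct and is exactly what the paper's one-line proof says: the central path of the extension is $P$, possibly with an additional vertex at either end.

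However, you have conflated \emph{extension} with \emph{growth}. The observation concerns the $1$-extension of $A$, which by definition is the single graph obtained from $H$ by attaching $m(v)$ pendant edges at each vertex $v$. There is no ambient tree $T$, no copy of $A$ inside $T$, no minimality condition, and no ``at most one edge'' restriction; all of that belongs to the notion of growth, not extension. In particular, your sentence ``a growth of a copy $A'$ of $A$ in $T$ adds at most one edge'' is both off-topic and not quite right (a $1$-growth may add up to $k(A)$ edges). Once you strip away this extraneous machinery, what remains is precisely the paper's argument: since $\supp(m)\subseteq P$, every new pendant is attached to a vertex of $P$, so the non-leaf vertices of the extension still induce a path (namely $P$, or $P$ with one extra vertex at an end if an endpoint of $P$ was a leaf of $H$ and receives a pendant). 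Your discussion of the forbidden subdivided $S_{1,1,1}$ and the endpoint case is fine but more than is needed.
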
 
\begin{figure}[!h]
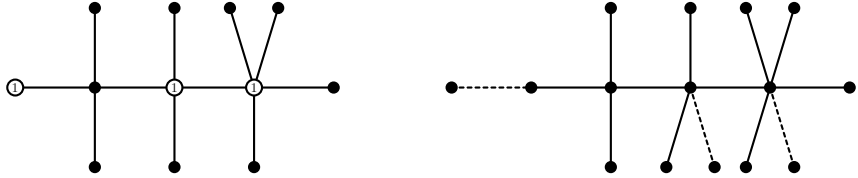

  \centering
    \mpfile{cat}{10}\qquad\qquad \mpfile{cat}{11}

    \caption{A caterpillar $(C(0,2,2,3,0), m)$ and its extension $C(0,0,2,3,4,0)$.}
    \label{pic:cat}
  \end{figure}
\begin{proof}
It is enough to observe that the central path of the obtained graph corresponds to $P$, possibly with an additional vertex on either endpoint of $P$ (also see \cref{pic:cat}).
\end{proof}

We now give a necessary and sufficient condition for an amoeba from $\mathcal A$ to be shiftable `to the right'.
We start with some definitions.

\begin{definition}[of slow integer sequences]
    Let $S=(d_1,d_2,\dots)$ be an integer sequence. 
    We say that $S$ is 
    \begin{itemize}
        \item     \emph{slowly decreasing}, if $d_{i}-d_{i-1}\ge -1$ for all $d_i,d_{i-1}$; 
        \item     \emph{slowly increasing}, if $d_{i}-d_{i-1}\le 1$ for all $d_i,d_{i-1}$; 
        \item \emph{slow} if it is either slowly increasing or slowly decreasing.
    \end{itemize}
\end{definition}
\begin{definition}[of slow amoebas]
    For an integer sequence $S$
    let $(p_1,\dots,p_\ell)$ be the central path  of $C(S)$ and $(C(S),m)$ be an amoeba from  $\mathcal A$.
    The amoeba $(C(S),m)$ is said to be
\begin{itemize}
\item  \emph{slowly decreasing} if $S$ is  slowly decreasing 
and the roots are placed at $p_i$ whenever $d_i-d_{i-1}=-1$, and also in $p_\ell$;
\item \emph{slowly increasing} if $S$ is   slowly decreasing and the roots are placed at $p_{i-1}$ whenever $d_i-d_{i-1}=1$, and also in~$p_1$; 
\item \emph{slow} if it is either slowly increasing or slowly decreasing. 
\end{itemize}
\end{definition}

\begin{theorem}
An amoeba $A\in \mathcal A$ is mortal if and only if $[A]$ is not slow. 
\label{thm:non-symmetr-amoebas}
\end{theorem}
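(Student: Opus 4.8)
The plan is to prove both directions by exploiting the structural rigidity established by the preceding Observation: since the support of $m$ lies on the central path, every extension of an amoeba in $\mathcal A$ yields a caterpillar, so the entire generation sequence stays inside $\mathcal A$ and can be tracked purely through the integer sequence of degrees $S=(d_1,\dots,d_\ell)$ together with the positions of the roots. By the Orbit Lemma (\cref{orbit-lemma}) with $\ell=1$, immortality of $A$ is equivalent to immortality of its completion $[A]$, so I may assume from the start that $A=[A]$ is completed; this lets me work with a canonical root placement and avoid the asymmetry between $A$ and $[A]$ in the statement.

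For the direction that slowness implies immortality, I would exhibit an explicit infinite generation sequence. Take a slowly decreasing slow amoeba $(C(S),m)$; the hypothesis places a root exactly at each vertex $p_i$ where $d_i-d_{i-1}=-1$ and at $p_\ell$. The key observation is that a root $p_i$ of current degree $d_i+2$ (two path-neighbours plus $d_i$ pendants) needs $\tilde d(p_i)=d_i+2+1$ incident edges to be satisfied, so if it is deficient the extension attaches a pendant edge, effectively increasing a $d$-value and shifting the ``decreasing step'' one position along the path. I expect that repeatedly extending at the appropriate root reproduces a translated copy of the original caterpillar — exactly the ``shift to the right'' phenomenon the text announces just before the theorem and illustrates in \cref{pic:1root-P3} — and that this translation can be iterated forever. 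I would verify that at each stage the minimality condition forces the new pendant edge (rather than reusing an existing one), using the slow condition to guarantee the relevant root is genuinely deficient, and that the resulting tree is again a caterpillar with a slow sequence, closing the induction.

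For the converse, that immortality forces $[A]$ to be slow, I would argue contrapositively: if $S$ is not slow then it contains a position where it jumps by at least $2$, i.e. $|d_i-d_{i-1}|\ge 2$, or the roots are mismatched with the steps. I would invoke the degree machinery of \cref{subs:degree}, in particular \cref{saturation}, which says that after finitely many steps every extending copy is disjoint from any fixed $V(G_i)$; combined with the fact that extensions only add pendant edges and that each degree can be incremented only through the reachability graph $D$, a non-slow profile creates a vertex whose degree demand can never be met by a fresh pendant extension, so the process saturates and halts. Concretely, a jump of size $\ge 2$ or a badly placed root produces a root whose deficiency persists yet whose neighbourhood becomes frozen, forcing mortality.

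The main obstacle I anticipate is the bookkeeping in the converse direction: making precise the claim that a non-slow sequence eventually produces a permanently-satisfied-or-permanently-stuck configuration. The forward direction is essentially a construction and should follow once the shift dynamics are pinned down, but the converse requires ruling out all escape routes — in particular, showing that a large jump in the degree sequence cannot be ``smoothed out'' by growth elsewhere that later feeds back. I would handle this by tracking the quantity $\tilde d(v)=d(v)+m(v)$ along the central path and arguing that slowness is exactly the invariant preserved under extension, so its failure is detected already in $H$ (equivalently in $[A]$) and cannot be repaired; the completion step via \cref{orbit-lemma} is what makes this local-to-$H$ detection legitimate.
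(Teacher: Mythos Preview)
Your forward direction (slow $\Rightarrow$ immortal) matches the paper: both construct the ``shift'' generation sequence and verify it never terminates.

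Your converse, however, has a genuine gap. You propose to derive mortality of a non-slow caterpillar amoeba from the degree machinery of \cref{subs:degree} (the reachability graph $D$, \cref{th:degree-condition}, and \cref{saturation}). But those results are only \emph{necessary} conditions for immortality, and non-slow amoebas in $\mathcal A$ can satisfy all of them. For instance, take $S=(0,1,2,0,1,2,1,0)$ with $\supp(m)=\{p_1,p_2,p_4\}$. The caterpillar is asymmetric, so $[A]=A$. It is not slowly decreasing because $d_4-d_3=-2$, and not slowly increasing because $d_6-d_5=1$ while $p_5$ is not a root; hence $[A]$ is not slow. Yet $k=3$, $\Delta(H)=4=1+k$, and in $D$ the roots give edges $(1,2)$, $(2,3)$, $(3,4)$, so $q=m=4$ and every condition of \cref{th:degree-condition} holds. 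Thus nothing in \cref{subs:degree} detects this amoeba as mortal, and your proposed argument cannot conclude. Your fallback claim that ``slowness is exactly the invariant preserved under extension'' is not an argument either: it is precisely the statement to be proved.

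The paper's approach for this direction is different and does the real work you are missing. It looks directly at $G_2$ (the second term of the generation sequence) and proves a rigidity lemma: in each of the four non-slow cases, \emph{every} copy of $[A]$ in $G_2$ has the same interior $\{p_2,\dots,p_{\ell-1}\}$ of its central path. This is a local combinatorial fact about how the caterpillar sits inside its first extension, not a degree-counting argument. Once that rigidity is established, a small case analysis on the four possible alignments of the central path yields a contradiction with the degree profile of $G_2$, and one checks that the single extra pendant needed creates no new copies. You should replace the appeal to \cref{subs:degree} by this kind of explicit analysis of copies in $G_2$.
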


\begin{proof}
    Let $S=(d_1,d_2,\dots)$ be the degree sequence of the central path of $[A]$. 
    To prove the `if direction' assume w.l.o.g. that $[A]$ is slowly decreasing. 
    We define a sequence of graphs $\mathcal G= (G_1,G_2,\dots)$,  
    starting with  $G_1=C(S)$. 
    (Informally speaking, $\mathcal G$ is a sequence of `right shifts' of $[A]$.)
    Every member of this sequence is a caterpillar graph, hence, the vertex notation (vertices $p_1,\dots$) is well defined.  
    
    Within $G_i$ define a~set of vertices $\mathcal C_i\subseteq V(G)$ as $\mathcal C_i=\{p_i,\dots, p_{i+|S|}\}$.

    If $G_i$ admits a copy $H$ of $C(S)$ containing  $\mathcal C_i$ then  $G_{i+1}$ is obtained from $G_i$ by extending $H$. 

   Due to \cref{lm:suff-conf-mort} of the `if direction' of the statement, it is enough to prove that $\mathcal G$ is infinite. 
   Assume for contradiction that $\mathcal G$ has (a finite) length $k$.
    By definition $G_1=C(S)$ 
    contains $\mathcal C_1$, hence, $k\ge 2$.

    Furthermore, $G_k$ does not admit a copy of $C(S)$ containing  $\mathcal C_k$. In other words, either  
    \begin{enumerate}[(i)]
           \item there is an~$i$ such that  $d_{G_k}(p_{k+i})<d_i+2$ with $i<|S|$, or\label{it:one}
        \item  $d_{G_k}(p_{k+i})<d_i+1$ with $i=|S|$.\label{it:two}
 \end{enumerate}
  Note that (\ref{it:two})   implies $d_{G_k}(p_{k+|S|})=0$, which is impossible.
Let us fix an~$i$ satisfying    (\ref{it:one}).

    By construction, there exists a copy $H$ of $C(S)$ in $G_{k-1}$ containing $\mathcal C_{k-1}$ whose extension gives rise to $G_k$. By~(\ref{it:one}) we have $d_i-d_{i-1}<0$;  furthermore, $d_i-d_{i-1}=-1$, since $S$ is slowly decreasing.
    However, due to the extension of $[A]= (H,m')$ in $G_{k-1}$, we are guaranteed that 
    $m'(p_{k+i})>0$,
    and hence
    $d_{G_k}(p_{k+i})-2\ge d_i$, which is a contradiction.

To prove the `only if direction' of the claim assume that $[A]=(H,m')$ is neither slowly increasing nor slowly decreasing.
This can be due to one of the following reasons. There exist:
\begin{enumerate}
    \item  $i$ and $j$ such that 
    $d_i-d_{i-1} <-1$, while 
    $d_j-d_{j-1} > 1$;
    \item  $i$ and $j$ such that 
    $d_i-d_{i-1} <-1$, and 
    $d_j-d_{j-1} = 1$, while $p_{j-1}$ is not a root;
    \item   $i$ and $j$ such that 
    $d_i-d_{i-1} =-1$, and 
    $d_j-d_{j-1} > 1$, while
    $p_i$ is not a root;
    \item  $i$ and $j$ such that 
    $d_i-d_{i-1} =-1$ and 
    $d_j-d_{j-1} = 1$, while neither $p_i$ nor $p_{j-1}$ are roots.
\end{enumerate}

Consider the central path $(p_1,\dots,p_{\ell})$ in an arbitrary copy of $[A]$ in $G_1$.
In all four above cases
the following claim holds:
\begin{lemma}
    For all copies of $[A]$ in $G_2$, the sets $\{p_2,\dots,p_{\ell-1}\}$ coincide. 
    \label{lem:short}
\end{lemma}
\begin{proof}
    Indeed, any path of length $\ell-1$ in $G_2$ contains   a pendant vertex either  from $V(G_2)\setminus V(G_1)$ or  from $G_1$, and in the latter case it is   adjacent to either $p_2$ or $p_{\ell-1}$.
In fact, the former case of this alternative is possible only if the amoeba is slow. Otherwise, one of the cases listed above holds. 
\end{proof}

Denote by $(p'_1,\dots,p'_{\ell})$ the vertices of $G_2$ which correspond to $(p_1,\dots,p_{\ell})$ of $A$. 
By \cref{lem:short}, 
for $1< i\leq \ell$, we have that either
\[
p'_i = p_{i-1}, \text{ or}\quad
p'_i = p_{i}, \text{ or}\quad
p'_i = p_{\ell-i+1}, \text{ or}\quad
p'_i = p_{\ell-i}.
\]
In all four cases, we obtain a contradiction with the vertex degrees' conditions in $G_2$.

To finish the proof, take a copy $A'$ of $[A]$ in $G_2$ such that its central path contains a pendant vertex $v$ from $G_1$  adjacent to $p_{\ell-1}$; the case when it is adjacent to $p_2$ is proved similarly.
To have enough room for $A'$ to grow, we add one pendant edge to $v$ only. 
Notice that no new copies of $[A]$ appear, thus $[A]$ is mortal, and due to \cref{orbit-lemma}, so is $A$. 
\end{proof}
\newpage
\section{Directions for future  work}\label{sec:future}
As announced in \cref{sec:amoebas}, we conjecture the following.
\begin{conjecture}
    
    The following  properties of an amoeba $A$ are equivalent:
    \begin{itemize}
        \item $A$ is mortal.
        \item $A$ is not immortal.
        \item $A$ is  mortal w.r.t. any tree.
    \end{itemize}
\end{conjecture}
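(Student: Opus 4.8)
The plan is to retain the two implications that come for free and to concentrate all the difficulty into a single statement about confinement. The free implications are that strong immortality $\Rightarrow$ immortality $\Rightarrow$ partial immortality: a maximal growth sequence starting at $H$ always exists, so if all of them are infinite then in particular one is, which is immortality; and an infinite generation sequence is by definition an infinite growth sequence starting at $H$, which is partial immortality. Hence it suffices to close the cycle by proving partial immortality $\Rightarrow$ strong immortality. I would argue the contrapositive: if $A$ is \emph{not} strongly immortal, then $A$ is not partially immortal. By \cref{th:mortality-confinement} the latter is equivalent to total confinement, so everything reduces to the following \emph{Key Claim}: if $A$ admits a finite generation sequence, then $A$ is totally confined. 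Combined with \cref{th:mortality-confinement} and the two free implications, the Key Claim yields $\neg(\text{strongly immortal})\Rightarrow\neg(\text{partially immortal})$, i.e.\ partial immortality $\Rightarrow$ strong immortality, and all three notions coincide.

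Second, I would unpack what ``not strongly immortal'' buys us. As observed in the remark following \cref{th:mortality-confinement}, a finite generation sequence terminates at a tree that confines $A$; thus $\neg(\text{strongly immortal})$ already hands us a single confining tree $C$. The crucial point is that the proof of \cref{th:mortality-confinement} never uses the full strength of total confinement: for each starting tree $T_0$ it consumes only \emph{one} confining tree that simultaneously contains a copy of $T_0$ and a deep full $\delta$-ary tree $T^{\delta,N}$ centered on that copy, where $\delta=\max_{v\in V(H)}(d_H(v)+m(v))$. So the whole conjecture reduces to manufacturing such rich confining trees out of the single tree $C$. The plan for this step is a padding/amalgamation construction: graft a copy of $T_0$ onto the center of a deep full $\delta$-ary tree, and then cap the pendant ends using copies of $C$, arranged so that every vertex lying on a copy of $A$ retains at least $m(v)$ spare incident edges that extend to disjoint paths of length $\ell$. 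If this can be carried out while keeping every copy of $A$ dead, the resulting tree is confining and rich, and the Key Claim follows.

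Third, once such a rich confining tree $\widehat C$ is available, I would rerun the embedding argument of \cref{th:mortality-confinement} essentially verbatim. Start a growth sequence $T_0,T_1,\dots$ from the grafted copy of $T_0$ at the center of $\widehat C$. A growth of $T_i$ that either has diameter at most $2\ell-6$ or uses an edge of $T_0$ is localised within bounded distance of the center (at most $2\ell-6$, respectively $2d$), so it embeds into $\widehat C$ because of the room provided there; a growth that avoids $T_0$ in a tree of diameter larger than $2\ell-6$ is unique up to isomorphism by \cref{lm:depth-bound}, so its new edges can be chosen inside the confining padding, exactly as in the proof of \cref{lm:suff-conf-mort}. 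In every case $T_{i+1}$ embeds into the finite tree $\widehat C$, and since the sequence grows strictly it must terminate. This establishes $\neg(\text{partially immortal})$ and closes the cycle.

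The hard part will be the second step, namely upgrading one confining tree to arbitrarily large \emph{regular} confining trees. Padding is dangerous: gluing $T_0$, the $\delta$-ary tree, and copies of $C$ together can create brand-new copies of $A$ straddling the seams, and such a copy may well be alive, destroying confinement. Guaranteeing deadness at every seam requires controlling the ambiguity of growths near the grafting regions, and this is precisely the phenomenon that \cref{lm:depth-bound} only tames for trees of large diameter. For $\ell\in\{1,2\}$ all growths are isomorphic (the mechanism behind \cref{lm:suff-conf-mort}), so the padding is automatically safe and the argument goes through; for $\ell\ge 3$ the possible non-isomorphic growths near the seams are the genuine obstruction, which is exactly why the equivalence is currently known only for $\ell\in\{1,2\}$. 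An alternative, compactness-flavoured reformulation --- that the finitely branching tree of all growth sequences is either finite or leafless --- repackages the statement cleanly via K\"onig's lemma, but it still demands the same amoeba-specific dichotomy and therefore does not bypass this obstacle.
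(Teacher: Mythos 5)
You are attempting a statement that the paper itself poses as an open conjecture---it contains no proof of it---so the question is whether your argument actually closes the problem, and it does not. Your two ``free'' implications and the contrapositive reduction are sound: by \cref{th:mortality-confinement}, everything hinges on your Key Claim that the single confining tree produced by a finite generation sequence can be upgraded to total confinement, i.e.\ to confining trees containing an arbitrary prescribed $T_0$ together with a deep full $\delta$-ary tree centered on it. But that Key Claim is exactly the open problem the paper flags after introducing confinement (``mortality implies the existence of a confining tree. We do not know whether the opposite is true''), resolved there only for $\ell\in\{1,2\}$ in \cref{lm:suff-conf-mort}; and your own final paragraph concedes that the padding/amalgamation construction at its heart is unproven for $\ell\ge 3$, since grafting $T_0$, the $\delta$-ary tree, and copies of the confining tree $C$ can create live copies of $A$ straddling the seams, and nothing in your sketch rules this out. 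Moreover, your Key Claim is, via \cref{th:mortality-confinement}, formally equivalent to the hard implication of the conjecture itself (not strongly immortal $\Rightarrow$ not partially immortal), so the reduction is a clean repackaging that buys no new leverage: what you have is a plan whose central step is acknowledged to be the genuine obstruction, not a proof.

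One further caution: your side claim that for $\ell\in\{1,2\}$ the padding is ``automatically safe'' is also unjustified. The isomorphism of growths that drives \cref{lm:suff-conf-mort} controls how a \emph{fixed} copy of $A$ extends; it says nothing about whether \emph{new} copies of $A$ created at the seams of your padded tree are dead, which is a degree-and-structure condition rather than a uniqueness-of-growth condition. Consistently with this, the paper does not assert the equivalence of immortality and partial immortality even for $\ell\in\{1,2\}$: the remark after \cref{lm:suff-conf-mort} derives only ``immortal $\Leftrightarrow$ strongly immortal'' there, and \cref{orbit-lemma} is stated for immortality when $\ell\in\{1,2\}$ but only for partial immortality when $\ell>2$---precisely because the bridge you need between a confining tree and total confinement is missing in all cases beyond what \cref{th:mortality-confinement} already provides.
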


Deciding mortality looks hard (undecidable); perhaps one can model a~general Turing machine by amoebas. For this purpose a more general notion of an amoeba colony might be useful. 
An amoeba colony is a set of amoebas $\cA = \{A_1,\dots, A_n\}$. 
An \emph{$\ell$-growth} $G'$ of $\cA$ in $G$ is an $\ell$-growth of  some $A_i\in\cA$. 
The remaining definitions are the same as for the case of  amoebas. So, in construction of an $\ell$-growth sequence for an amoeba colony, one can choose an alive amoeba to be extended. It gives more freedom and we conjecture that such a~process can simulate a universal computation.

\begin{conjecture}
  The following algorithmic problem is undecidable.

\begin{description}
    \item\textsc{Input:} an integer $\ell$, a tree $T$ and an amoeba colony $\cA$.

\item\textsc{Question:} is there an infinite growth sequence for $\cA$ starting from $T$?
\end{description}
\end{conjecture}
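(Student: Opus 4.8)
The plan is to prove undecidability by a computable many-one reduction from the complement of the halting problem for deterministic Turing machines: from a machine $M$ and an input $w$ I will build, effectively, a triple $(\ell, T, \cA)$ such that the colony $\cA$ admits an infinite $\ell$-growth sequence starting from $T$ if and only if $M$ never halts on $w$. Since non-halting of deterministic machines is co-r.e.-complete, hence undecidable, this establishes the conjecture. The guiding idea, exactly as suggested in the paper, is to let a growth sequence of the colony simulate the step-by-step computation of $M$, with each growth step implementing one transition, so that the sequence is infinite precisely when the computation is.

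The encoding must cope with the fact that growth is monotone: edges are only added, never deleted, so I cannot overwrite a tape cell. Instead I encode the entire space--time diagram (the computation history), appending a fresh encoding of the next configuration at every step. Concretely, each configuration is encoded as a rigid \emph{block}: a subtree hanging off a distinguished spine vertex and containing one cell-gadget per relevant tape position, where a cell-gadget is a small tree whose shape (say, a pendant path of prescribed length together with a characteristic local degree pattern) records the tape symbol, a head-present/absent marker, and, at the head, the current control state. The initial tree $T$ encodes the start configuration $C_0$ with $w$ on the tape. Rigidity is enforced by assigning mutually distinct path lengths and degree signatures to the different symbol/state gadgets, so that each amoeba pattern $H_i$ embeds into the growing tree only at genuine, intended occurrences; $\ell$ is chosen large enough that the extension paths provide the connectors needed by the gadgets.

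Each local transition of $M$ --- ``in state $q$ reading $a$, write $b$, move in direction $D$, enter $q'$'' --- is realized by one amoeba $A_i=(H_i,m_i)$ of the colony. The tree $H_i$ matches the local pattern of a cell together with its neighbours at the current \emph{frontier} (the most recently produced configuration block), and the multiplicities $m_i$ are set so that the unique alive copy of $A_i$ must attach exactly the cell-gadgets forming the corresponding cell of the successor configuration; the flanking cells that merely copy their symbols are handled by analogous propagation amoebas. The multiplicities are balanced so that, once the successor block is completed, every root of every gadget of the old frontier acquires precisely its required number of incident extra edges and therefore becomes dead, while the only alive copies are those on the new frontier. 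For a deterministic $M$ the simulation is thereby forced: at every faithfully reached tree exactly one configuration block is active and exactly one successor block can be produced, so the growth sequence is essentially unique up to the harmless choice among isomorphic copies.

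Correctness then splits cleanly. If $M$ runs forever, each computation step supplies a nontrivial growth, so an infinite growth sequence exists. If $M$ halts, the frontier eventually carries a halting state, no transition amoeba has an alive copy there, all copies become dead, and every growth sequence terminates; hence none is infinite. This yields the desired equivalence. The main obstacle is the rigidity and faithfulness of the gadget design in the presence of the two features peculiar to this model: the append-only nature of growth, which forces a computation-history, tiling-style encoding rather than in-place update, and the need to guarantee simultaneously that (i) no spurious subtree ever matches a transition amoeba, (ii) a completed configuration block becomes permanently dead, and (iii) only the intended successor block can be attached. Verifying these three invariants --- in essence, that the set of alive copies of $\cA$ is at all times exactly the active frontier --- will be the technical heart of the argument; should a direct Turing-machine encoding prove unwieldy, the identical scheme can instead be driven by the tiling (domino) problem, whose local-compatibility constraints map even more transparently onto transition amoebas.
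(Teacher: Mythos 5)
The statement you are proving is one of the paper's \emph{conjectures}: the authors give no proof, only the one-sentence suggestion that an amoeba colony ``can simulate a universal computation.'' Your sketch adopts exactly that suggested route (a computation-history/tiling-style encoding with one transition amoeba per local rule), so there is no alternative paper argument to compare it against; the only question is whether your argument stands on its own. It does not, and you say so yourself: the entire technical core --- exhibiting the gadgets and verifying invariants (i)--(iii), i.e.\ that the set of alive copies is at all times exactly the intended frontier --- is deferred. What you have is a reduction \emph{programme}, not a proof, and the deferred part is precisely where the difficulty of this model lives.

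Concretely, three model-specific obstacles are glossed over. First, a copy of $A_i=(H_i,m_i)$ is \emph{any} subtree of the current unlabeled tree isomorphic to $H_i$; you must exclude spurious alive copies over the entire growing history, including hybrid copies straddling dead blocks, spine connectors, and freshly added extension paths. Asserting ``rigidity by distinct path lengths and degree signatures'' is a hope, not a construction --- and the minimality condition in the definition of growth lets an extension reuse edges already present, so an $\ell$-extension of an unintended copy may be realizable by paths you never built, keeping it alive or letting it grow somewhere unplanned. Second, a single alive copy can have non-isomorphic growths (the paper exhibits this in Fig.~\ref{pic:growth}), so your claim that ``the simulation is thereby forced'' does not follow; for the halting direction you need that \emph{every} growth sequence terminates, universally over the adversarial choice of amoeba, of copy, and of (possibly non-isomorphic) growth, and nothing in the sketch establishes this. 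Third, your claim that completed frontier blocks become ``permanently dead'' is plausible --- deadness is indeed monotone, since an $\ell$-extension contained in $T_i$ persists in every later tree --- but it requires a counting argument showing each finished block literally contains $m(v)$ disjoint length-$\ell$ paths off every root of every copy meeting it, which again depends on the unexhibited gadgets. Until these are supplied, the equivalence ``infinite growth sequence iff $M$ diverges'' is unsupported and the statement remains, as in the paper, open; a workable fallback you mention --- driving the scheme by a tiling problem --- faces the same three obstacles, since they stem from the unlabeled-subtree matching and edge-minimal growth rules, not from the choice of the source problem.
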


The following generalizations  might be of independent interest.

A more general type of an extension of an amoeba is to attach to each vertex a specified tree. A general-type amoeba is $(T, \pi)$, where $\pi $ is a \emph{root function} from $V(T)$ to rooted trees (the empty tree is allowed). All other definitions are naturally extended to the case of general-type amoebas.

A general-type amoeba is called  \emph{absolutely mortal} if  it is mortal for any root function $\pi$. Characterization of absolutely mortal amoebas is an interesting open question in this area.

Another way of generalization is to substitute trees by arbitrary graphs, possibly disconnected. We know a little about this general problem. But it also seems to be interesting.

As already mentioned in \cref{rem:confining-graphs}, we can extend the definition of confining trees to general graphs. This would naturally generalize the concept of totally confined amoebas. 
From our previous results \cite{gurvich2023avoidability} it follows that there exist amoebas which do not admit confining trees but can be confined by a general graph. For various amoebas, the confining graphs are cages: Petersen graph, Heawood graph, McGee graph, Tutte–Coxeter graph.
The confinement of general amoebas might lead to interesting results about cages and other graphs having rich structure. 

\paragraph{Acknowledgements.}
The authors are grateful to Martin Milanič for valuable discussions and observations, Susan D. Cook for English proofreading, and to an anonymous reviewer for many helpful remarks. 

This research was partially prepared within the framework of the HSE University Basic Research Program. 
This work is supported in part by the Slovenian Research Agency (research program P1-0383 and research projects N1-0160, N1-0209, J1-3003, J1-4008 and J5-4596).
The work of the third author was partially supported by State Assignment, theme no. FFNG-2024-0003.

\end{document}